\definecolor{fxnote}{rgb}{1,0,0}
\newtheorem{theorem}{Theorem}[section]
\newtheorem{lemma}[theorem]{Lemma}
\newtheorem{proposition}[theorem]{Proposition}
\newtheorem{corollary}[theorem]{Corollary}
\newtheorem{definition}[theorem]{Definition}
\theoremstyle{definition}
\newtheorem{remark}[theorem]{Remark}
\newcommand{\ZZ}{\mathbb{Z}}
\newcommand{\NN}{\mathbb{N}}
\newcommand{\cP}{\mathcal{P}}
\newcommand{\cS}{\mathcal{S}}
\newcommand{\cU}{\mathcal{U}}
\newcommand{\cF}{\mathcal{F}}
\newcommand{\Jac}{\operatorname{Jac}}
\newcommand{\End}{\operatorname{End}}
\newcommand{\Specm}{\operatorname{Specm}}
\newcommand{\kar}{\operatorname{char}}
\newcommand{\op }{{\operatorname{op}}}
\newcommand{\uprod}{\prod_{\cU}}
\newcommand{\fprod}{\prod_{\cF}}
\newcommand{\leftbimod}[3]{\vphantom{#1}^{#2}{\kern-#3pt #1}}
\numberwithin{equation}{subsection}
\begin{document}

\title{Finitely generated bimodules over Weyl algebras}

\date{}

\author{Niels Lauritzen and Jesper Funch Thomsen} 

\maketitle 

\begin{abstract}
Let $A$ be the $n$-th Weyl algebra over a field of characteristic zero, and $\varphi:A\rightarrow A$ an endomorphism with $S = \varphi(A)$. We prove that if $A$ is finitely generated as a left or right $S$-module, then $S = A$. The proof involves reduction to large positive characteristics. By holonomicity, $A$ is always finitely generated as an $S$-bimodule. Moreover, if this bimodule property could be transferred into a similar property in large positive characteristics, then we could again conclude that $A=S$. The latter would imply the Dixmier Conjecture.
\end{abstract}

\section*{Introduction}

Let $\varphi:A\rightarrow A$ denote an endomorphism of the $n$-th Weyl algebra over a field $k$. Let $S=\varphi(A)$ denote the image of $\varphi$, and consider $A$ as a bimodule over $S$ by left and right multiplication. When $k$ is a field of characteristic zero, it is known \cite{Bavula} that $A$ is holonomic as an $S$-bimodule (see \cite{LTG} for an interpretation of this result using tensor products of bimodules), and therefore, $A$ is finitely generated (in fact, cyclic) as an $S$-bimodule. In positive characteristic, the Weyl algebra is an Azumaya algebra, and thus, finite generation of $A$ as an $S$-bimodule is actually equivalent to finite generation of $A$ as an $S$-module from either left or right. In large characteristics, this would imply that $\varphi$ induces a finite étale endomorphism of the center of $A$, which leads to the conclusion that $A=S$.

Therefore, it becomes interesting to transfer the finite generation of $A$ as an $S$-bimodule in characteristic zero to large positive characteristics. This would imply $A=S$ in characteristic zero, providing a proof of Dixmier's conjecture also resolving the Jacobian conjecture \cite[p.~297]{Bass}, \cite{Kontsevich}, \cite{Tsuchimoto1}, which is a long standing open problem \cite[p.~301]{Keller}. Regrettably, the direct deduction of finite generation of $A$ as an $S$-bimodule in large positive characteristics from its finite generation in characteristic zero remains elusive. Nevertheless, the condition of being finitely generated from either left or right translates nicely to large positive characteristics. Consequently, if $A$ is finitely generated as an $S$-module from either left or right, we can conclude $A=S$ in characteristic zero. This result answers the last question
posed in \cite{LT} affirmatively.

To make the positive characteristic reduction work, a suitable version of the simply connectedness of
affine space in positive characteristic is needed (see Theorem \ref{theorem:finetale})\footnote{Michel Brion has told us that a refinement of
the suitable positive characteristic version of simply connectedness follows from a computation 
of the prime to $p$ fundamental group of affine space using results 
by Orgogozo.
We will address this in \cite{LTQ}, where Weyl algebra
endomorphisms are treated exclusively in positive characteristics.}. 
This involves reduction to
characteristic zero and occupies Section \ref{section:polynomialring} in the last part of the paper.
Section \ref{section:polynomialring} does not depend on the previous sections and can be read
separately. Along the way, we give a very simple proof (see Theorem \ref{theorem:Gabber})
of the result \cite[(1.4) Corollary]{Bass} that if $\varphi$ is an automorphism of affine $n$-space, 
then $\deg(\varphi^{-1})\leq \deg(\varphi)^{n-1}$.

In the end of the paper we sketch how degree bounds for Gr\"obner bases also lead
to the simply connectedness in positive characteristic.

The most accessible way to handle transitions between characteristic zero and
positive characteristic in our setup is
through ultraproducts. We begin by briefly introducing this approach.

\section{Ultraproducts}

In this section $X$ denotes a non-empty set and $R$ a commutative ring.

\subsection{Filters}
A \emph{filter} (see also \cite[Chapter I, \S 6]{BourbakiTop1-4}) 
on $X$ is a subset $\cF\subset 2^X$ with 
$\emptyset\not\in \cF$, $A\cap B\in \cF$ if $A, B\in \cF$ 
and $B\in \cF$ if $A\in \cF$ and $B\supset A$, where $A$ and  $B$ are subsets of $X$.

A maximal filter is called an \emph{ultrafilter}. 
The set $\cF_a = \{S\subset X \mid a\in S\}$ of all subsets of $X$ containing a specific $a\in X$ is an ultrafilter on $X$. Such a filter is called \emph{principal}.
The properties below follow (almost) 
immediately from the definition of a filter.

\begin{enumerate}[label=(\roman*)]

\item 
Every filter is contained in
an ultrafilter (by Zorn's lemma).
\item 
If $\cS$ is a non-empty collection of subsets of $X$ and
 $S_1 \cap\cdots \cap S_n\neq \emptyset$ for
 finitely many $S_1, \dots, S_n\in \cS$, then
 $$
 \{Z\subset X \mid Z\supset S_1 \cap \cdots \cap S_n\  
 \text{for finitely many $S_1, \dots, S_n\in \cS$}\}
 $$
 is a filter containing $\cS$.
 \item 
An ultrafilter $\cU$ has
 the property that
 $A\in \cU\text{or } X\setminus A\in \cU$
 for every subset $A\subset X$.
\end{enumerate}
All ultrafilters on finite sets are principal. On
infinite sets there exists non-principal ultrafilters.
In fact,
\begin{enumerate}[resume*]
 \item 
 An ultrafilter contains the (filter of) cofinite subsets on an infinite set if and 
 only if it is non-principal.
 \end{enumerate}

\subsection{Ultraproducts}

Let $(S_i)_{i\in X}$ be a family of sets. A filter $\cF\neq \emptyset$ on
    $X$ defines an equivalence relation on $\prod_{i\in X} S_i$
    given by $(s_i)\sim (t_i)$ if and only if 
    $\{i \mid s_i = t_i\}\in \cF$. We use the notation
    $$
    \prod_{\cF} S_i :=  \left(\prod_{i\in X} S_i\right) \bigg/ \sim.
    $$
    If $S_i$ are commutative rings, then 
    $I_\cF = \{(s_i) \mid (s_i) \sim 0\}$ is an ideal in
    $\prod_{i\in X} S_i$ and
    $$
    \prod_{\cF} S_i = \prod_{i\in X} S_i \bigg/ I_\cF.
    $$
    Suppose that $S_i$ are fields. Then $\cF \mapsto I_\cF$ gives
    an inclusion preserving correspondence between filters on $X$
    and proper ideals in $\prod_{i\in X} S_i$. 
    The filter corresponding
    to a proper ideal $I$ is $\{Z(s) \mid s\in I\}$, where
    $Z(s) = \{i\in X \mid s_i = 0\}$. In particular, we get that 
    $\uprod S_i$ is a field if $\cU$ is an ultrafilter on $X$.

Let $\Specm(R)$ denote the set of 
maximal ideals of $R$ and $\Jac(R)$ the Jacobson radical of $R$ i.e., the intersection of all maximal ideals of $R$.

  \begin{proposition}\label{proposition:ultrafilterfield}
  Let $R$ be an integral domain with $\Jac(R) = \{0\}$,
  $X = \Specm(R)$ and $X_r = \{\mathfrak{m}\in X\mid r\notin \mathfrak{m}\}$
  for $r\in R$.
  Then
  $$
  \cF = \{Z\subseteq X \mid X_r \subseteq Z \text{ for some $r\in R\setminus\{0\}$}\}
  $$
  is a filter on $X$. Let $\cU$ be an ultrafilter on $X$ containing $\cF$.
  Then $\uprod R/\mathfrak{m}$ contains the fraction field of $R$.
\end{proposition}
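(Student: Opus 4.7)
The plan is to first verify that $\cF$ is genuinely a filter, then construct the obvious ring homomorphism $R\to\uprod R/\m$ and show that it is injective, and finally invoke the universal property of localization to extend it to an embedding of the fraction field.

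For the first step, I would check the three filter axioms. Superset closure is built into the definition. For the intersection property, the identity $X_r\cap X_s=X_{rs}$, combined with the fact that $R$ is a domain (so $rs\ne 0$ whenever $r,s\ne 0$), shows $X_r\cap X_s$ again contains some basic set $X_t$ with $t\ne 0$. The crucial point is that $\emptyset\notin\cF$: I must show that $X_r\ne\emptyset$ for every nonzero $r\in R$, and this is precisely where the hypothesis $\Jac(R)=\{0\}$ enters — if $X_r=\emptyset$ then $r$ lies in every maximal ideal, hence in $\Jac(R)$, forcing $r=0$.

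Next, there is a canonical ring homomorphism
\[
\psi: R \longrightarrow \uprod R/\m, \qquad r\mapsto \text{class of }(r \bmod \m)_{\m\in X}.
\]
To see that $\psi$ is injective, suppose $r\ne 0$ in $R$. Then the set of coordinates where $r$ maps to zero is $\{\m\in X\mid r\in\m\}=X\setminus X_r$. By definition $X_r\in\cF\subseteq\cU$, and since $\cU$ is a (proper) filter its complement $X\setminus X_r$ cannot also lie in $\cU$. Hence $(r\bmod\m)\not\sim 0$, so $\psi(r)\ne 0$.

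Finally, from the discussion immediately preceding the proposition, $\uprod R/\m$ is a field (as an ultraproduct of fields, via the ultrafilter–ideal correspondence). An injective ring homomorphism from a domain $R$ into a field $K$ extends uniquely to an injective homomorphism $\operatorname{Frac}(R)\hookrightarrow K$ by sending $r/s$ to $\psi(r)\psi(s)^{-1}$, which is well-defined because $\psi(s)\ne 0$ for all $s\ne 0$. This exhibits $\operatorname{Frac}(R)$ as a subfield of $\uprod R/\m$. The only non-routine point in the whole argument is the use of $\Jac(R)=\{0\}$ to guarantee nonemptiness of $X_r$, which both makes $\cF$ a filter and, through inclusion in $\cU$, yields injectivity of $\psi$.
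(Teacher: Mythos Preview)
Your proof is correct and follows essentially the same approach as the paper: verify the filter axioms (using $\Jac(R)=\{0\}$ for $\emptyset\notin\cF$ and the domain hypothesis for closure under intersection via $X_{rs}\subseteq X_r\cap X_s$), then show the canonical map $R\to\uprod R/\m$ is injective by noting $X_r\in\cU$ forces $X\setminus X_r\notin\cU$, and conclude by passing to the fraction field. The only difference is that you spell out the final localization step a bit more explicitly than the paper does.
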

\begin{proof}
  Notice that $X_r = \emptyset$ if and only if $r = 0$ by the assumption 
  $\Jac(R) = \{0\}$. Therefore $\emptyset\notin \cF$.
  Let $Z_1, Z_2\in \cF$ with
   $X_r \subseteq Z_1$ and $X_s \subseteq Z_2$, where $r, s\in R\setminus\{0\}$. 
   Then $r s\in R\setminus \{0\}$ and $X_{rs} \subseteq Z_1\cap Z_2$. Clearly, 
    $Z_2\in \cF$ if $Z_1\in \cF$ and $Z_1\subseteq Z_2$.

    Let 
    $$
    \pi: R\to \uprod R/\mathfrak{m}
    $$
    be the canonical map. Suppose that $r\in R\setminus \{0\}$ and 
    $\pi(r) = 0$. Then $r\in \mathfrak{m}$ for every 
    $\mathfrak{m}\in Z$ for some $Z\in \cU$ so that 
    $Z\subseteq X\setminus X_r$. Therefore $X\setminus X_r\in \cU$
    contradicting that $X_r\in \cU$. It follows that $\pi$ is injective and
    therefore that $\uprod R/\mathfrak{m}$ contains the fraction field of $R$.
\end{proof}

For $F\subset R[x_1, \dots, x_n]$, we let $V_S(F) = \{\alpha\in S^n \mid f(\alpha) = 0 \text{ for all } f\in F\}$,
where $S$ is an $R$-algebra.

\begin{lemma}\label{lemma:redmodp}
  Let  $\cF$ be a filter on 
  a set $X$. 
  Consider $F = \{f_1, \dots, f_m\} \subset 
  R[x_1, \dots, x_n]$  and let 
  $$
  S = \fprod S_j, 
  $$
  where 
  $\{S_j \mid j\in X\}$ is a set of $R$-algebras.
  Fix elements $\gamma_i = (\gamma_{i,j})_{j\in X}$
  in $\prod_{j\in X} S_j$, for $i=1,2,\dots,n$. Then 
  $([\gamma_1], [\gamma_2],
  \dots, [\gamma_n])\in V_S(F)$
  if and only if there exists $B \in \cF$ such 
  that $(\gamma_{1,j}, \gamma_{2,j}, \dots, \gamma_{n,j})\in V_{S_j}(F)$ 
  for $j \in B$. 
\end{lemma}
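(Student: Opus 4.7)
My plan is to exploit two elementary features of the construction: ring operations in $\prod_{j\in X}S_j$ and hence in $\fprod S_j$ are computed componentwise, and a filter is closed under finite intersections and passage to supersets. These together reduce the statement to a bookkeeping argument about the set $B_k$ of indices on which $f_k$ vanishes.

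First I would verify the componentwise identity
$$
f_k\bigl([\gamma_1],\dots,[\gamma_n]\bigr) \;=\; \bigl[\,(f_k(\gamma_{1,j},\dots,\gamma_{n,j}))_{j\in X}\,\bigr]
$$
in $S = \fprod S_j$, for each $k=1,\dots,m$. This is immediate from the definition of the quotient: addition and multiplication on $\prod_{j\in X}S_j$ are defined coordinatewise, so any polynomial expression over $R$ is computed coordinatewise as well, and $\sim$ is a ring congruence. Consequently $f_k([\gamma_1],\dots,[\gamma_n])=0$ in $S$ is equivalent to $B_k := \{j\in X\mid f_k(\gamma_{1,j},\dots,\gamma_{n,j})=0\}\in\cF$.

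The ``if'' direction then follows directly: given $B\in\cF$ with $(\gamma_{1,j},\dots,\gamma_{n,j})\in V_{S_j}(F)$ for all $j\in B$, we have $B\subseteq B_k$ for every $k$, hence $B_k\in\cF$ by upward closure, hence every $f_k$ vanishes at $([\gamma_1],\dots,[\gamma_n])$. For the ``only if'' direction, assume each $B_k$ lies in $\cF$ and take $B := B_1\cap\cdots\cap B_m$, which lies in $\cF$ because a filter is stable under finite intersection. By construction, for every $j\in B$ all of $f_1,\dots,f_m$ vanish at $(\gamma_{1,j},\dots,\gamma_{n,j})$, so this tuple lies in $V_{S_j}(F)$.

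There is no real obstacle; the only point worth flagging is that the finiteness of $F$ is used essentially, since an arbitrary filter need not be closed under infinite intersections. The argument works for any filter $\cF$; no ultrafilter hypothesis is needed.
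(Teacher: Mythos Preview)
Your argument is correct and follows essentially the same route as the paper's proof: both establish the componentwise identity \eqref{eq:crux}, deduce that each $f_k$ vanishes in $S$ exactly when the corresponding index set $B_k$ lies in $\cF$, and then invoke closure under finite intersections to handle the finite family $F$. Your write-up is slightly more explicit in separating the two directions and naming the upward-closure step, but the content is the same.
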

\begin{proof}
Notice that if $f \in R[x_1,\dots,x_n]$ is a 
polynomial, then we have the following identity in $S$
\begin{equation}\label{eq:crux}
 f([\gamma_1], [\gamma_2],\dots, [\gamma_n]) = [\big( f(\gamma_{1,j}, \gamma_{2,j},
   \dots, \gamma_{n,j})\big)_{j \in X}].
   \end{equation}
In particular, $f([\gamma_1], [\gamma_2],
  \dots, [\gamma_n])=0$ if and only if
  there exists a $B \in \cF$, such that
  $f(\gamma_{1,j}, \gamma_{2,j}, \dots, \gamma_{n,j})=0$ 
  for $j \in B$. Using that $\cF$ is 
  stable under finite intersections, the claim is a
  direct consequence of \eqref{eq:crux}.
\end{proof}

\section{The Weyl algebra}

In the following $\NN = \{0, 1, 2\, \dots\}$ denotes the natural numbers. For $n\in \NN$ and an $n$-tuple
 $a = (a_1, \dots, a_n)$ of elements in a ring $A$, we use the notation $a^v$ 
 with $v = (v_1, \dots, v_n)\in \ZZ^n$ to denote
$$
a_1^{v_1}\cdots a_n^{v_n}
$$
if $v\in \NN^n$ and $0$ if
$v\in \ZZ^n \setminus \NN^n$. 
We will use $a\in \ZZ$ to denote $(a, \dots, a)\in \ZZ^n$, when it fits the
context. For integral vectors $u = (u_1, \dots, u_n)$ and $v = (v_1, \dots, v_n)$, we let $u\leq v$ denote
the partial order given by $u_1\leq v_1, \dots, u_n \leq v_n$.
For $a, b\in A$, we let $[a, b] = ab - ba$.

Let $R$ denote a commutative ring.
For $n\in \NN$, $A_n(R)$ denotes the $n$-th Weyl algebra 
over $R$. This is the (free) $R$-algebra generated by $2n$
variables $x = (x_1, \dots, x_n), \partial=(\partial_1, \dots, \partial_n)$
with relations
\begin{align}
  \begin{split}\label{walgrels}
    [x_i, x_j] &= 0\\
    [\partial_i, \partial_j] &= 0\\
    [\partial_i, x_j] &= \delta_{ij}
  \end{split}
\end{align}
for $i, j = 1, \dots, n$. 
  Elements in
    $$
    S_n = \{x^u \partial^v \mid u, v\in \NN^n\}\subset A_n(R)
    $$
    are called standard monomials. We have not been 
    able to track down a proof of the following result (for
    general commutative rings). We therefore outline a
    proof (of linear independence) shown to us by J.~C.~Jantzen.

\begin{proposition}\label{proposition:stdbasis}
  $A_n(R)$ is a free $R$-module with basis $S_n$.
\end{proposition}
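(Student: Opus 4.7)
The plan is to combine two ingredients: a rewriting argument for spanning, and a faithful representation on the polynomial ring for linear independence. The latter is the substantive step.

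For spanning, I would show by induction that every monomial in the generators $x_1,\dots,x_n,\partial_1,\dots,\partial_n$ is congruent modulo the relations \eqref{walgrels} to an $R$-linear combination of standard monomials. The commutations $[x_i,x_j]=0$ and $[\partial_i,\partial_j]=0$ let one sort within the $x$-block and the $\partial$-block, while $\partial_i x_j = x_j\partial_i + \delta_{ij}$ moves any out-of-order adjacent pair $\partial_i x_j$ to the right form; the second term is strictly shorter, and the first has one fewer such inversion. A lexicographic well-order on (number of $\partial_i x_j$ adjacencies, total length) guarantees termination in the normal form $\sum c_{u,v}\, x^u\partial^v$.

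For linear independence I would follow Jantzen's approach and construct an $R$-linear representation of $A_n(R)$ on $P = R[y_1,\dots,y_n]$ by letting $x_i$ act as multiplication by $y_i$ and $\partial_i$ as partial differentiation in $y_i$. The resulting operators satisfy the defining relations, so by the universal property they assemble into a ring homomorphism $\rho:A_n(R)\to \End_R(P)$. A direct computation gives
\[
\rho(x^u\partial^v)(y^w)=[w]_v\, y^{w-v+u},
\]
where $[w]_v = \prod_i w_i(w_i-1)\cdots(w_i-v_i+1)$, valid whenever $v\le w$ (and the result is $0$ otherwise). Now suppose a relation $\sum_{u,v} c_{u,v}\, x^u\partial^v = 0$ holds in $A_n(R)$. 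Apply $\rho$ to $y^w$ for $w\ge w_0$ large enough that $v\le w$ for every $v$ appearing in the sum, and group the resulting terms according to the shift $e = u-v\in \ZZ^n$; since distinct shifts contribute to distinct monomials $y^{w+e}$, we obtain, for each fixed $e$,
\[
\sum_{v:\ v+e\ge 0} c_{v+e,v}\, [w]_v \;=\; 0 \quad\text{for all $w\ge w_0$ in $\NN^n$.}
\]

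The main obstacle, and the part requiring care over a general commutative ring $R$, is converting this family of identities into the vanishing of the individual $c_{u,v}$. I would do this in two short steps. First, a polynomial $p\in R[w_1,\dots,w_n]$ that vanishes at every sufficiently large lattice point is zero: after translating, one may assume $p$ vanishes on all of $\NN^n$, expand $p=\sum_a c_a \binom{w_1}{a_1}\cdots\binom{w_n}{a_n}$ in the integer-valued binomial basis, and use $\binom{a_i'}{a_i}=\delta_{a_i,a_i'}$ for $a_i'\le a_i$-type evaluations to recover the $c_a$ inductively. Second, the falling-factorial polynomials $[w_1]_{v_1}\cdots [w_n]_{v_n}$ have leading term $w^v$ with coefficient $1$ and hence form an $R$-basis of $R[w_1,\dots,w_n]$. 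Combining the two, the displayed identity forces $c_{v+e,v}=0$ for every $v$ and every $e$, so $\rho$ is injective on the $R$-span of $S_n$; together with spanning, this proves that $S_n$ is a free $R$-basis of $A_n(R)$.
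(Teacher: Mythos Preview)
Your argument is correct, but it is not the proof the paper gives, and in particular it is not the representation that the paper attributes to Jantzen. You represent $A_n(R)$ on the polynomial ring $P=R[y_1,\dots,y_n]$ via multiplication and differentiation, which forces you into the nontrivial (though valid) endgame: group by shift $e=u-v$, observe that $\sum_v c_{v+e,v}\,[w]_v$ vanishes on all large lattice points, prove over a general ring $R$ that such a polynomial is identically zero, and then invoke that the falling factorials form an $R$-basis. All of this is fine and works over any commutative $R$, but it is exactly the complication Jantzen's trick is designed to avoid.

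The paper instead lets $A_n(R)$ act on the free $R$-module with basis $\{X^uD^v : u,v\in\NN^n\}$ by
\[
x_i\cdot X^uD^v = X^{u+e_i}D^v,\qquad
\partial_i\cdot X^uD^v = X^uD^{v+e_i} + u_i\,X^{u-e_i}D^v.
\]
The point is that on the ``vacuum'' $X^0D^0$ the second summand in the $\partial_i$-action always vanishes, so one gets $x^u\partial^v\cdot X^0D^0 = X^uD^v$ on the nose. Linear independence is then a one-line computation: $\sum c_{uv}x^u\partial^v=0$ implies $\sum c_{uv}X^uD^v=0$, hence all $c_{uv}=0$. Your approach trades this clever choice of module for the most natural one, at the cost of an extra page of (correct) combinatorics; the paper's choice buys a proof that needs no analysis of polynomial identities at all.
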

\begin{proof}
    Using the relations \eqref{walgrels} it follows
    that $S_n$ spans $A_n(R)$ as an $R$-module. To prove
    linear independence of $S_n$, we let $A_n(R)$ act on the
    free $R$-module with basis
    $$
    M = \{X^u D^v \mid u, v\in \NN^n\}
    $$
    through
    \begin{align*}
        x_i\cdot X^u D^v &= X^{u + e_i} D^v\\
        \partial_i\cdot X^u D^v &= X^u D^{v + e_i} + u_i X^{u-e_i} D^v
    \end{align*}
    for $i = 1, \dots, n$, where $e_i$ denotes the $i$-th canonical basis vector.
    Suppose that 
    $$
    \Delta = \sum_{u, v\in \NN^n} c_{uv} x^u \partial^v = 0
    $$
    in $A_n(R)$.
    Then acting on $M$ we get
    $$
    \Delta\cdot X^0 D^0 = \sum_{u, v\in \NN^n} c_{uv} X^u D^v = 0
    $$
    showing that $c_{uv}=0$ thereby proving the $R$-linear independence of $S_n$.
\end{proof}

\begin{definition}\label{definition:stddegree}
  The degree of $f\in A_n(R)\setminus\{0\}$, where
  $$
  f = \sum_{u, v\in \NN^n} a_{uv} x^u \partial^v
  $$
  is defined as
  $$
  \deg(f) = \max\{|u| + |v| \mid a_{uv}\neq 0\},
  $$
  where 
  $$
|w| = w_1 + \dots + w_n
$$
for $w = (w_1, \dots, w_n)\in \NN^n$. 
The
degree of an endomorphism 
$\varphi:A_n(R)\rightarrow A_n(R)$ is defined as 
$$
\deg(\varphi) = \max\{\deg(\varphi(x_1)), \dots, 
\deg(\varphi(x_n)), \deg(\varphi(\partial_1)), \dots, 
\deg(\varphi(\partial_n))\}.
$$
\end{definition}

\subsection{The Weyl algebra in positive characteristic}

\begin{theorem}\label{theorem:JAlg}
  Suppose that $\kar(R) = p$, where $p$ is a prime number and let
  $$
  C = R[x_1^p, \dots, x_n^p, \partial_1^p, \dots, \partial_n^p]\subset A:=A_n(R).
  $$  
  Then 
  \begin{enumerate}[(i)]
  \item \label{item:centerofAn}
  The center of $A$ is equal to $C$.
  \item \label{item:pexpansion}
  If $Q_1, \dots, Q_n, P_1, \dots, P_n\in A$ satisfy the
  commutation relations for the Weyl algebra i.e.,
  \begin{align*}
  \begin{split} 
  [Q_i, Q_j] &= 0\\
  [P_i, P_j] &= 0\\
  [P_i, Q_j] &= \delta_{ij}
  \end{split}
  \end{align*}
  for $i, j = 1, \dots, n$, then 
  $$
  \{Q^\alpha P^\beta \mid \alpha, \beta \in \NN^n, 0 \leq \alpha, \beta \leq p-1\}
  $$ 
   is a basis for $A$ as a module over $C$.
  \item \label{Abimod}
  $A$ is an Azumaya algebra over $C$ and (therefore) 
  $F(N) = N\otimes_C A$ defines an equivalence between the category of $C$-modules and 
  the category of $C$-linear $A$-bimodules (i.e., $A$-bimodules $M$ satisfying $r m = m r,$ for $r\in C$ and $m\in M$) with inverse 
  $$
  G(M) = M^A = \{m\in M \mid a m = m a, \forall a\in A\}.
  $$
  Here $N$ denotes a $C$-module and $M$ a $C$-linear $A$-bimodule  . The
  equivalence preserves finitely generated modules.
\end{enumerate}
\end{theorem}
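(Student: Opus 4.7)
For part (i), I would proceed by direct commutator computation. The inclusion $C \subseteq Z(A)$ follows from Leibniz applied to the derivations $[-,x_j]$ and $[-,\partial_j]$: we get $[x_i^p,\partial_j] = p\,x_i^{p-1}[x_i,\partial_j] = -p\delta_{ij}\,x_i^{p-1}$, which vanishes in characteristic $p$, and analogously $[\partial_i^p,x_j]=0$; the remaining commutators vanish by \eqref{walgrels}. For $Z(A)\subseteq C$, I expand $f \in Z(A)$ in the standard basis of Proposition~\ref{proposition:stdbasis} as $f = \sum a_{uv}x^u\partial^v$. Computing $[f,\partial_j] = -\sum a_{uv}u_j\,x^{u-e_j}\partial^v$ and $[f,x_j] = \sum a_{uv}v_j\,x^u\partial^{v-e_j}$, linear independence of standard monomials forces $u_j\,a_{uv} = v_j\,a_{uv} = 0$ in $R$ for all $u,v,j$; since $R$ is an $\FF_p$-algebra this means $a_{uv}=0$ unless every $u_j,v_j$ is divisible by $p$, i.e.\ $f \in C$.

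For part (ii), I would begin with the baseline $(Q,P)=(x,\partial)$. Writing $u=ps+u'$, $v=pt+v'$ with $0\le u',v'<p$, the centrality of $x_i^p,\partial_i^p$ from (i) allows us to rearrange $x^u\partial^v = (x^p)^s(\partial^p)^t\,x^{u'}\partial^{v'}$, and Proposition~\ref{proposition:stdbasis} then gives $\{x^{u'}\partial^{v'} : 0\le u',v'<p\}$ as a $C$-basis of $A$, so $A$ is free of rank $p^{2n}$ over $C$. For arbitrary $Q,P$ satisfying the Weyl relations, my plan has three steps: (1) show $Q_i^p,P_i^p \in C$, using the identity $(\ad a)^p = \ad(a^p)$ in characteristic $p$ together with the vanishing of $(\ad Q_i)^p$ on the Weyl generators (since $\ad Q_i$ kills each $Q_j$ and sends $P_j$ to the scalar $-\delta_{ij}$); (2) observe that $M:=\sum_{0\le\alpha,\beta<p} C\cdot Q^\alpha P^\beta$ is a $C$-subalgebra of $A$, since the commutation relations put any product in standard order and overflow powers $Q_i^p,P_i^p$ are absorbed into $C$ by (1); (3) conclude $M=A$ with $\{Q^\alpha P^\beta\}$ linearly independent over $C$. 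Step (3) is the main obstacle; my plan is to interleave with (iii) and argue fiberwise on $\Spec C$: at each maximal $\mathfrak{m}\subset C$, the Azumaya property makes $A\otimes_C(C/\mathfrak{m})$ a central simple algebra of $(C/\mathfrak{m})$-dimension $p^{2n}$ in which the images of $Q,P$ furnish a Weyl representation, forcing the $p^{2n}$ elements $\bar Q^\alpha\bar P^\beta$ to span by the uniqueness of such representations; Nakayama together with a rank comparison then globalizes to $M=A$ and linear independence.

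For (iii), the Azumaya claim amounts to showing that the canonical map $A\otimes_C A^{\operatorname{op}}\to\End_C(A)$ sending $a\otimes b$ to $x\mapsto axb$ is an isomorphism. Both sides are free $C$-modules of rank $p^{4n}$ by (ii), so surjectivity suffices; I would verify this by exhibiting sufficient left- and right-multiplication operators in the image, again reducing to fibers where the algebra becomes matrix-like. The equivalence $F(N)=N\otimes_C A$, $G(M)=M^A$ is then a standard formal consequence: $GF\simeq\mathrm{id}$ uses $A^A = Z(A) = C$ together with the Azumaya identification, while $FG\simeq\mathrm{id}$ follows from faithful flatness of $A$ over $C$ and a dimension check; preservation of finite generation is immediate since $A$ is a finitely generated $C$-module.
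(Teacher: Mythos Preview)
Your argument for (i) and for the baseline case of (ii) is correct and supplies details that the paper simply outsources to \cite[Theorem~1.7]{LT}.

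The gap is in the interleaving of general (ii) and (iii). Your step~(3) for (ii) invokes the Azumaya property, while your sketch of surjectivity in (iii) says you will ``reduce to fibers where the algebra becomes matrix-like''. If the algebra in question is $A\otimes_C C/\mathfrak m$, then declaring it matrix-like \emph{is} the fiberwise Azumaya statement you are trying to prove; if instead you mean $\End_{C/\mathfrak m}(A/\mathfrak m A)$, which is genuinely a matrix algebra, then showing that the images of the Weyl-$2n$ generators span it is precisely the general statement (ii) again, one level up. Either reading is circular. Your ``uniqueness of such representations'' in step~(3) likewise hides the nontrivial input that the quotient $A_n(k)/(x_i^p-a_i,\partial_i^p-b_i)$ is simple, which you neither prove nor cite.

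The paper avoids this loop by a single observation: the elements $\partial_i\otimes 1,\ 1\otimes\partial_i,\ x_i\otimes 1,\ 1\otimes x_i$ in $A^e=A\otimes_C A^{\op}$ satisfy the commutation relations of $A_{2n}$, and the \emph{same} argument that establishes (ii) in \cite{LT} (which is independent of (iii)) then shows that the corresponding monomials in their images form a $C$-basis of $\End_C(A)\cong M_{p^{2n}}(C)$. This gives the isomorphism $A^e\to\End_C(A)$ in one stroke, with no fiberwise reduction; the categorical equivalence and the preservation of finite generation are then quoted from \cite{KnusOjanguren} and \cite{AF}. To repair your route you must break the circle at one of two points: either give a self-contained proof of general (ii) that does not use (iii), or prove directly that every fiber $A\otimes_C C/\mathfrak m$ is central simple (e.g.\ reduce to $n=1$ and check that $k\langle x,\partial\rangle/(x^p-a,\partial^p-b,[\partial,x]-1)$ is simple for any field $k$ of characteristic $p$ and any $a,b\in k$). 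Once that is in hand, your Nakayama/rank argument finishes the job.
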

\begin{proof}
    See \cite[Theorem 1.7]{LT} for a proof of \ref{item:centerofAn} and 
    \ref{item:pexpansion}. 
    Let $A^e = A\otimes_C A^{\op}.$
    To prove \ref{Abimod}, we need to show (cf. \cite[Theorem III.5.1, 2)]{KnusOjanguren}) that the natural map
    \begin{equation}\label{eq:Azumaya}
    \varphi: A^e\rightarrow \End_C(A)
    \end{equation}
    of $C$-algebras given by $\varphi(a\otimes b)(x) = a x b$
    is an isomorphism. Notice that
    $A^e$ and $\End_C(A)$ are free $C$-modules of rank $p^{4n}$. The elements
    $\partial_1\otimes 1, \dots, \partial_n\otimes 1, 1\otimes \partial_1, \dots, 1\otimes \partial_n$ and
    $x_1\otimes 1, \dots, x_n\otimes 1, 1\otimes x_1, \dots, 1\otimes x_n$
    define $a_1, \dots, a_{2n}, b_1, \dots, b_{2n}\in A^e$ with 
    $[a_i, a_j] = [b_i, b_j] = 0$ and $[a_i, b_j] = \delta_{ij}$ for $i, j = 1, \dots, 2n$.
     
    As in the proof of Theorem 1.7$(ii)$ in \cite{LT}, it follows that 
    $$
    M = \{\alpha^u \beta^v \mid u, v\in \NN^{2n}, 0\leq u, v\leq p-1\}\subset \End_C(A)
    $$
    is a $C$-basis of $\End_C(A)$, where 
    $\alpha = (\varphi(a_1), \dots, \varphi(a_{2n}))$ and $\beta = (\varphi(b_1), \dots, \varphi(b_{2n}))$.
    Therefore $\varphi$ is an isomorphism of $C$-modules and $A$ is an Azumaya algebra over $C$. This
    implies that
    $F$ and $G$ is an adjoint pair of inverse equivalences between the category of $C$-modules and
    the category of $C$-linear $A$-bimodules by \cite[Theorem III.5.1, 3)]{KnusOjanguren}. 
    Finally, finite generation is a categorical property preserved under equivalences by \cite[Proposition 21.8]{AF}.  
 \end{proof}
  
\subsection{Endomorphisms of Weyl algebras}

For an arbitrary ring $S$ and a ring endomorphism $\varphi:S \rightarrow S$, we let
${}_\varphi S_\varphi$ denote the $S$-bimodule $S$ with  multiplication
$$
x s y = \varphi(x) s \varphi(y),
$$
where $s, x, y\in S$. Similarly ${}_\varphi S$ denotes $S$ as a left module with multiplication given by $x s = \varphi(x) s$. 
Using the notation from Theorem \ref{theorem:JAlg}, we have the following corollary.

\begin{corollary}\label{corollary:WeylAlgp}
Let $\varphi:A \rightarrow A$ be an $R$-algebra homomorphism. Then
\begin{enumerate}[(i)]
\item\label{item:varphiR} 
$\varphi(C)\subset C$
  \item\label{item:varphiiso}
  $\varphi$
   is an isomorphism if and only if  $\varphi|_C$ is an isomorphism. 
  \item\label{item:fingen}
   The bimodule ${}_\varphi A_\varphi$ is $C$-linear with $({}_\varphi A_\varphi)^A ={}_\varphi C$.
   Furthermore, 
  ${}_\varphi A_\varphi$ is finitely generated if and only if $\varphi|_C$ is finite.
\end{enumerate}
  \end{corollary}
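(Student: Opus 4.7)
The plan is to prove (i), (ii), (iii) in order, using the $C$-basis description from Theorem \ref{theorem:JAlg}(ii) as the main engine. Set $Q_i = \varphi(x_i)$ and $P_j = \varphi(\partial_j)$; since $\varphi$ is an $R$-algebra homomorphism, these satisfy the Weyl commutation relations in $A$.

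For (i), it suffices to show $Q_i^p, P_j^p \in C$, because $C$ is generated over $R$ by the $x_i^p$ and $\partial_j^p$. The identity $[P_j, Q_i^n] = n\delta_{ij} Q_i^{n-1}$ (easy induction, using that $\delta_{ij}$ is central) yields $[P_j, Q_i^p] = 0$ in characteristic $p$, while $[Q_k, Q_i^p] = 0$ is trivial; the symmetric argument handles $P_j^p$. By Theorem \ref{theorem:JAlg}(ii), $A$ is generated over $C$ by $\{Q_i, P_j\}$, so $Q_i^p$ and $P_j^p$ commute with all of $A$ and hence lie in the center $C$.

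For (ii), the forward direction is trivial since an isomorphism preserves centers. For the converse, expand any $a\in A$ uniquely as $a = \sum_{0\le\alpha,\beta\le p-1} c_{\alpha\beta} x^\alpha \partial^\beta$ with $c_{\alpha\beta}\in C$; then $\varphi(a) = \sum \varphi(c_{\alpha\beta}) Q^\alpha P^\beta$, and these coefficients are again unique because $\{Q^\alpha P^\beta\}$ is also a $C$-basis (Theorem \ref{theorem:JAlg}(ii) applied to $Q, P$). Injectivity and surjectivity of $\varphi$ therefore transfer directly from those of $\varphi|_C$.

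For (iii), $C$-linearity follows immediately from (i), since $\varphi(r)\in C$ is central for $r\in C$. An element $m\in A$ lies in $({}_\varphi A_\varphi)^A$ iff $\varphi(a)m = m\varphi(a)$ for all $a\in A$, equivalently iff $m$ centralizes $\{Q_i, P_j\}$. Writing $m = \sum c_{\alpha\beta} Q^\alpha P^\beta$ in the $C$-basis and using that $\ad P_k$ is a $C$-linear derivation with $[P_k, Q_i] = \delta_{ki}$, one computes $[P_k, m] = \sum c_{\alpha\beta}\,\alpha_k\, Q^{\alpha - e_k} P^\beta$; the $C$-basis property and the nonvanishing of $1,\dots,p-1$ in $\FF_p$ force $c_{\alpha\beta} = 0$ whenever $\alpha_k \ne 0$, and a symmetric computation with $[Q_k, m]$ kills the $\beta_k \ne 0$ coefficients. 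Hence $m \in C$, and the inherited left $A$-action $c\cdot m = \varphi(c) m$ identifies $({}_\varphi A_\varphi)^A$ with ${}_\varphi C$. The finite generation statement then follows from the equivalence in Theorem \ref{theorem:JAlg}(iii), which transports finite generation of ${}_\varphi A_\varphi$ as a $C$-linear $A$-bimodule to finite generation of ${}_\varphi C$ as a $C$-module, and this is exactly the condition that $\varphi|_C$ be finite. The only delicate step is (i), where the characteristic $p$ identity $[P_j, Q_i^p] = 0$ is essential; parts (ii) and (iii) are then formal bookkeeping in the $C$-basis together with the Azumaya equivalence.
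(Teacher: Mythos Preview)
Your proof is correct and follows essentially the same route as the paper. The paper's proof is extremely terse---it simply says that (i), (ii), and the identification $({}_\varphi A_\varphi)^A={}_\varphi C$ are consequences of Theorem~\ref{theorem:JAlg}\ref{item:pexpansion} applied to $Q_i=\varphi(x_i)$, $P_j=\varphi(\partial_j)$, and that the finite generation equivalence follows from Theorem~\ref{theorem:JAlg}\ref{Abimod}---and you have unpacked exactly those citations: the commutator identity $[P_j,Q_i^p]=0$ in characteristic $p$ for (i), the comparison of the two $C$-bases $\{x^\alpha\partial^\beta\}$ and $\{Q^\alpha P^\beta\}$ for (ii), and the explicit centralizer computation plus the Azumaya equivalence for (iii).
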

\begin{proof}
    The first two claims are consequences of Theorem \ref{theorem:JAlg}
    $\ref{item:pexpansion}$
    with $P_i = \varphi(\partial_i)$ and $Q_i = \varphi(x_i)$ for
    $i = 1, \dots, n$. For the proof of $\ref{item:fingen}$, we note that the 
    $C$-linearity of ${}_\varphi A_\varphi$ follows from $\ref{item:varphiR}$ and 
  that $({}_\varphi A_\varphi)^A = {}_\varphi C$ follows from Theorem 
  \ref{theorem:JAlg}$\ref{item:pexpansion}$. The last statement in $\ref{item:fingen}$
  follows from Theorem \ref{theorem:JAlg}$\ref{Abimod}$.
\end{proof}

For completeness we include the proof of the following well known result (see \cite[Corollary 3.4]{Bavula1} for
the characteristic zero case).

\begin{theorem}\label{theorem:invbound}
  Let $R$ be a reduced commutative ring.
    If $\varphi:A_n(R)\rightarrow A_n(R)$ is an
    automorphism of $R$-algebras, then
    $$
    \deg(\varphi^{-1}) \leq \deg(\varphi)^{2n-1}.
    $$
\end{theorem}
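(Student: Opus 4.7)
The plan is to reduce the bound to the Bass--Connell--Wright inequality in the commutative case (Theorem \ref{theorem:Gabber}) applied to the center of $A_n(R)$ in positive characteristic. Set $\psi=\varphi^{-1}$ and $d=\deg(\varphi)$. Since $R$ is reduced, the natural map $R \hookrightarrow \prod_{\mathfrak{p}}\operatorname{Frac}(R/\mathfrak{p})$ over minimal primes is injective; it induces a coefficient-preserving embedding of Weyl algebras on which $\varphi,\psi$ descend to mutually inverse automorphisms of no greater degree, reducing the problem to the case where $R$ is a field.

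If $R$ is a field of characteristic $p>0$, Corollary \ref{corollary:WeylAlgp} asserts that $\varphi$ restricts to an $R$-algebra automorphism $\varphi|_C$ of the polynomial ring $C=R[x_1^p,\dots,x_n^p,\partial_1^p,\dots,\partial_n^p]$ in $2n$ variables. Since $\varphi(x_i^p)=\varphi(x_i)^p$ has Bernstein degree at most $pd$, and any monomial $x^{p\alpha}\partial^{p\beta}\in C$ has Bernstein degree $p(|\alpha|+|\beta|)$, the polynomial degree of $\varphi(x_i^p)$ in the generators of $C$ is at most $d$; hence $\deg(\varphi|_C)\leq d$. Theorem \ref{theorem:Gabber} in $2n$ variables yields $\deg(\psi|_C)\leq d^{2n-1}$, whence $\psi(x_i)^p=\psi(x_i^p)$ has Bernstein degree at most $p\cdot d^{2n-1}$. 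Because $A_n(R)$ is a domain with Bernstein associated graded the polynomial ring $R[x,\partial]$, one has $\deg(f^p)=p\deg(f)$ for every nonzero $f$, and therefore $\deg(\psi(x_i))\leq d^{2n-1}$; the argument for $\psi(\partial_j)$ is identical.

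For $R$ a field of characteristic zero, I would transfer to positive characteristic. Let $R_0\subset R$ be the $\ZZ$-subalgebra generated by all standard-monomial coefficients of $\varphi(x_i),\varphi(\partial_i),\psi(x_i),\psi(\partial_i)$; this is a finitely generated $\ZZ$-algebra, a domain, and Jacobson with $\Jac(R_0)=0$, while every quotient $R_0/\mathfrak{m}$ by a maximal ideal is a finite field of positive characteristic. The identities $\varphi\psi=\psi\varphi=\operatorname{id}$ descend to make $\varphi_\mathfrak{m},\psi_\mathfrak{m}$ mutually inverse automorphisms of $A_n(R_0/\mathfrak{m})$ with $\deg(\varphi_\mathfrak{m})\leq d$; the positive characteristic case forces $\deg(\psi_\mathfrak{m})\leq d^{2n-1}$, so every coefficient of $\psi(x_i),\psi(\partial_j)$ attached to a monomial of Bernstein degree exceeding $d^{2n-1}$ lies in every maximal ideal of $R_0$ and is therefore zero. (This transfer can alternatively be packaged via the ultraproduct framework of Proposition \ref{proposition:ultrafilterfield} and Lemma \ref{lemma:redmodp}.) The main delicacy is in this characteristic zero step: one must verify that the inverse relation $\varphi\psi=\operatorname{id}$ expands into finitely many polynomial identities over $R_0$ that reduce cleanly, so that $\psi_\mathfrak{m}$ is the genuine inverse of $\varphi_\mathfrak{m}$ in each reduction. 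Once this is in place, the positive characteristic argument is essentially a one-line consequence of Corollary \ref{corollary:WeylAlgp} and Theorem \ref{theorem:Gabber}.
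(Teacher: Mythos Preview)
Your proposal is correct and uses the same essential mechanism as the paper: reduce to a Weyl algebra over a field of positive characteristic, restrict the automorphism to the polynomial center $C$ in $2n$ variables via Corollary~\ref{corollary:WeylAlgp}, and invoke Theorem~\ref{theorem:Gabber}. The positive-characteristic paragraph in your argument matches the paper's final step almost verbatim (the paper states $\deg(\bar\varphi|_C)=\deg(\bar\varphi)$ rather than just $\leq$, but only the inequality is needed).

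The only difference is organizational. You first embed $R$ in a product of fields via minimal primes and then treat characteristic zero and characteristic $p$ separately, with the characteristic-zero case handled by a second reduction to a finitely generated $\ZZ$-subalgebra $R_0$ and the observation that every high-degree coefficient lies in $\Jac(R_0)=0$. The paper collapses these two layers into one: it passes directly from the reduced ring $R$ to the $\ZZ$-subalgebra generated by the finitely many coefficients of $\varphi^{\pm 1}$ (still reduced, hence Jacobson with zero radical), and then chooses a \emph{single} maximal ideal $\mathfrak m$ avoiding a top-degree coefficient of $\varphi^{-1}$, so that $\deg(\overline{\varphi^{-1}})=\deg(\varphi^{-1})$ survives in the reduction. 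Your detour through minimal primes and the ``all maximal ideals'' argument is redundant but not wrong; the paper's route simply gets to the positive-characteristic endgame in one move.
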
 
\begin{proof} 
  Using Proposition \ref{proposition:stdbasis} and expanding 
  $\varphi(x_i), \varphi(\partial_j), \varphi^{-1}(x_i), \varphi^{-1}(\partial_j)$ in the basis of the
  standard monomials for $i, j = 1, \dots, n$,
  we may assume that $R$ is finitely generated over $\ZZ$ and therefore
  a Jacobson ring (see \cite[Chapter V, \S3.4]{BourbakiCA}).
  Since $R$ is reduced, $\Jac(R) = 0$ and there exists a maximal ideal $\mathfrak{m}$ 
  not containing the coefficient of a 
  highest degree standard monomial in the definition of $\deg(\varphi^{-1})$ 
  (cf. Definition \ref{definition:stddegree}).
  But $k = R/\mathfrak{m}$ is a field of characteristic $p>0$ and $\varphi, 
  \varphi^{-1}$ induce inverse automorphisms $\bar{\varphi}, \bar{\varphi}^{-1}$ 
  of $A_n(k)$ with $\deg(\bar{\varphi}^{-1}) = \deg(\varphi^{-1})$ and
  $\deg(\bar{\varphi}) \leq  \deg(\varphi)$.  

  Now the result follows by restricting $\bar{\varphi}$ to the center $C$ of $A_n(k)$ 
  (cf. Corollary \ref{corollary:WeylAlgp}\ref{item:varphiiso}) and using
  Theorem \ref{theorem:Gabber} as $\deg(\bar{\varphi}|_C) = \deg(\bar{\varphi})$. 
\end{proof}

\begin{remark}
  Notice that the bound in Theorem \ref{theorem:invbound} only depends on
  $\deg(\varphi)$ and $n$. In particular, it does not depend on the (reduced) ring $R$.
  We can only prove Theorem \ref{theorem:invbound} when
  $R$ is reduced. The independence of $R$ in bounding the degree of the inverse
  in Theorem \ref{theorem:invbound} for nilpotent rings should be equivalent to 
  the Dixmier conjecture in analogy with \cite[Theorem (1.1)]{Bassinvdeg} and 
  \cite[Ch. I, Prop. (1.2)]{Bass}.
\end{remark}  

The following result is non-trivial and  crucial.

\begin{lemma}\label{lemma:etalerestrict}
  Let $k$ be a field of characteristic $p>0$ and $\varphi$ an endomorphism of $A_n(k)$. 
  Then $\varphi$ restricts to an \'etale endomorphism of the center of $A_n(k)$ if $p > 2 \deg(\varphi).$
\end{lemma}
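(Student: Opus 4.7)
The plan is to reduce étaleness to the statement that the Jacobian determinant of the restriction $\bar\varphi := \varphi|_C$ is a nonzero constant in $k$. By Corollary \ref{corollary:WeylAlgp}\ref{item:varphiR}, $\varphi$ does restrict to a $k$-algebra endomorphism $\bar\varphi\colon C \to C$, and identifying $C = k[X_1,\dots,X_n,Y_1,\dots,Y_n]$ via $X_i = x_i^p$, $Y_i = \partial_i^p$, the map $\bar\varphi$ is a polynomial self-map of $\mathbb{A}^{2n}_k$ sending $X_i \mapsto Q_i^p$ and $Y_i \mapsto P_i^p$, where $Q_i = \varphi(x_i)$ and $P_i = \varphi(\partial_i)$. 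For such a polynomial endomorphism, étaleness is equivalent (by unramifiedness at every closed point, plus flatness between regular rings of the same dimension) to $\det J(\bar\varphi) \in k^{\times}$.

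To extract $\det J(\bar\varphi)$ I would analyze the $p$-th power $f^p \in C$ for $f \in A_n(k)$ of degree $d \leq \deg(\varphi)$. In the Bernstein filtration ($\deg(x_i) = \deg(\partial_j) = 1$) the associated graded ring is the commutative polynomial ring $k[x,\partial]$, and since the principal symbol is multiplicative one has $\sigma(f^p) = \sigma(f)^p$, so the top-degree part of $f^p$ as a polynomial in $X,Y$ is $\sigma(f)^p$ with $X_i,Y_i$ substituted for $x_i^p,\partial_i^p$. Applied to $f = Q_j$ and $f = P_j$ this identifies the top-degree term of $\det J(\bar\varphi)$ as $(\det J(q,p))^p$, where $J(q,p)$ is the classical Jacobian of the symbols $q_j = \sigma(Q_j)$, $p_j = \sigma(P_j)$ viewed as polynomial functions $\mathbb{A}^{2n} \to \mathbb{A}^{2n}$.

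The commutation relations $[P_i,Q_j] = \delta_{ij}$ and $[P_i,P_j] = [Q_i,Q_j] = 0$ translate via the symbol map into Poisson bracket relations among the $q_i$ and $p_j$ with respect to the canonical symplectic structure on $k[x,\partial]$. In the linear case, where all $\deg Q_i = \deg P_j = 1$, this makes $(q,p)$ a linear symplectic transformation, so $\det J(q,p) = 1$ and immediately $\det J(\bar\varphi) = 1 \in k^{\times}$. The main obstacle is the nonlinear case: whenever $\deg Q_i + \deg P_j > 2$ the relevant Poisson brackets of symbols vanish, $J(q,p)$ becomes singular, and the naive top-degree identification produces zero, so one must descend to lower-degree terms via a refined expansion of $f^p$. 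The hypothesis $p > 2\deg(\varphi)$ is precisely what allows this refinement to be carried out cleanly (iterated commutators stay within a controlled range of the Bernstein filtration, and binomial coefficients in Jacobson's $p$-th power formula for $(A+B)^p$ behave predictably), and a direct tracking of the resulting corrections---or equivalently a $p$-curvature argument for the $\varphi$-twisted $A_n$-bimodule structure on $A_n(k)$---forces $\det J(\bar\varphi)$ to be a nonzero constant, establishing étaleness.
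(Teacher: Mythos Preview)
The paper does not prove this lemma at all: it simply cites \cite[Corollary 3.3]{Tsuchimoto1}. So there is no ``paper's approach'' to compare against beyond that external reference; your proposal is an attempt to supply what the paper outsources.

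Your reduction is sound: $\bar\varphi$ is a polynomial self-map of $\mathbb{A}^{2n}_k$, and \'etaleness is equivalent to $\det J(\bar\varphi)\in k^\times$. Your analysis of the top Bernstein-degree piece via principal symbols is also correct, as is your observation that in the nonlinear case the symbol-level Poisson brackets $\{p_i,q_j\}$ vanish, so the naive leading term of $\det J(\bar\varphi)$ is zero and one must look at lower-order corrections.

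The gap is the last paragraph. Everything hinges on actually computing those corrections and showing they assemble to a nonzero constant, and you have not done this. Naming Jacobson's formula for $(A+B)^p$, iterated commutators in the Bernstein filtration, and ``a $p$-curvature argument'' is a list of relevant tools, not a proof; the sentence ``a direct tracking of the resulting corrections \dots\ forces $\det J(\bar\varphi)$ to be a nonzero constant'' is precisely the statement of the lemma restated in different language. This is where the genuine content of Tsuchimoto's argument lives: one must exhibit an explicit identity (or differential-operator calculation) relating $Q_i^p$, $P_j^p$ and the Weyl relations that pins down $\det J(\bar\varphi)$, and the bound $p>2\deg(\varphi)$ enters to guarantee that certain iterated-commutator terms of Bernstein degree $<p$ cannot contribute to the center. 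Until that calculation is written out, the proposal is an outline, not a proof.
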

\begin{proof}
This is a consequence of \cite[Corollary 3.3]{Tsuchimoto1}.
\end{proof} 

\begin{remark}
  In \cite{LTQ}, the bound 
  in Lemma \ref{lemma:etalerestrict} is strengthened to $p > \deg(\varphi)$.  
\end{remark}

\section{The main result}

The following lemma is the positive characteristic version of Theorem \ref{theorem:main}, which is our main result. It basically
uses the equivalence between bimodules and modules over the center (cf. Theorem \ref{theorem:JAlg}$\ref{Abimod}$) to reduce to
the commutative case of finite \'etale endomorphisms of polynomial rings, which is
the focus of section \ref{section:polynomialring} and in particular Theorem \ref{theorem:finetale}. 
Notice again that it is crucial that 
an endomorphism of the Weyl algebra restricts to an \'etale endomorphism of the center
if the characteristic is large enough (cf. Lemma \ref{lemma:etalerestrict}).

\begin{lemma}\label{lemma:main}
  Let $k$ be a field of characteristic $p>0$, $A = A_n(k)$ and 
  $\varphi: A\rightarrow A$ an endomorphism of degree $\leq d$. 
  Then there exists a uniform bound $D = D(n, d)$ only depending on $n$ and $d$, such that if $p > D$ and 
  ${}_\varphi A_\varphi$ is finitely generated, then $\varphi$ is an isomorphism.
\end{lemma}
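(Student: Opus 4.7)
The plan is to chain together the three earlier results about the Weyl algebra in characteristic $p$, so as to reduce the statement to a question about finite étale endomorphisms of a polynomial ring in $2n$ variables, to which Theorem \ref{theorem:finetale} then applies. Throughout, let $C$ denote the center $k[x_1^p,\ldots,x_n^p,\partial_1^p,\ldots,\partial_n^p]$ of $A$, which by Theorem \ref{theorem:JAlg}\ref{item:centerofAn} is a polynomial ring in $2n$ variables $y_i := x_i^p$, $z_j := \partial_j^p$.

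First I would handle the étale part. Since we are free to impose $D \geq 2d$, we may assume $p > 2d \geq 2\deg(\varphi)$, so Lemma \ref{lemma:etalerestrict} tells us $\varphi|_C$ is an étale endomorphism of $C$. Next, the hypothesis that ${}_\varphi A_\varphi$ is finitely generated, combined with Corollary \ref{corollary:WeylAlgp}\ref{item:fingen}, yields that $\varphi|_C$ is finite. So $\varphi|_C : C \to C$ is a finite étale endomorphism of a polynomial ring in $2n$ variables over a field of characteristic $p$.

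The slightly fiddly point is to control $\deg(\varphi|_C)$ in the variables $y_i,z_j$ by $d$. By Corollary \ref{corollary:WeylAlgp}\ref{item:varphiR} we have $\varphi(y_i) = \varphi(x_i)^p \in C$, and $\varphi(x_i)$ has Weyl algebra degree at most $d$, so $\varphi(x_i)^p$ has Weyl algebra degree at most $pd$. When this central element is written as a polynomial in $y_1,\ldots,y_n,z_1,\ldots,z_n$, each monomial $y^u z^v$ equals the standard monomial $x^{pu}\partial^{pv}$ of Weyl degree $p(|u|+|v|)$. Hence only monomials with $|u|+|v|\leq d$ can appear, and the same bound holds for $\varphi(z_j) = \varphi(\partial_j)^p$. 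Therefore $\deg(\varphi|_C)\leq d$ in the variables $y_i,z_j$.

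Now I would invoke Theorem \ref{theorem:finetale} to obtain a bound $D'(2n,d)$, depending only on $2n$ and $d$, such that if $p > D'(2n,d)$ then any finite étale endomorphism of a polynomial ring in $2n$ variables over a field of characteristic $p$ of degree at most $d$ is an isomorphism. Setting $D(n,d) := \max\{2d,\,D'(2n,d)\}$, for $p > D(n,d)$ we conclude that $\varphi|_C$ is an isomorphism, and then Corollary \ref{corollary:WeylAlgp}\ref{item:varphiiso} promotes this to $\varphi$ itself being an isomorphism. The real content has been pushed into Theorem \ref{theorem:finetale}: the main obstacle is that the uniform bound $D$ must not depend on $k$, $p$ or $\varphi$ itself but only on $n$ and $d$. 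This uniformity is exactly what Theorem \ref{theorem:finetale} is designed to supply; the degree computation in the previous paragraph is what ensures we can feed our situation into that theorem with parameters $(2n,d)$ rather than something depending on $p$.
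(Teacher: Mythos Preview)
Your proof is correct and follows essentially the same route as the paper's: invoke Lemma~\ref{lemma:etalerestrict} for étaleness, Corollary~\ref{corollary:WeylAlgp}\ref{item:fingen} for finiteness, then Theorem~\ref{theorem:finetale} with parameters $(2n,d)$ for the isomorphism on the center, and finally Corollary~\ref{corollary:WeylAlgp}\ref{item:varphiiso} to lift back to $A$. The only difference is that you explicitly justify $\deg(\varphi|_C)\leq d$, whereas the paper takes this for granted here (the equality $\deg(\varphi|_C)=\deg(\varphi)$ is asserted without proof in the course of proving Theorem~\ref{theorem:invbound}); your argument for this bound is correct and a welcome addition.
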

\begin{proof}
Let $C$ be the center of $A_n(k)$.
    By Lemma \ref{lemma:etalerestrict}, 
    $\varphi|_C$ is \'etale for $p > 2 d$. The assumption that ${}_\varphi A_\varphi$ is
    finitely generated shows that
    $\varphi|_C$ is finite by Corollary \ref{corollary:WeylAlgp}$\ref{item:fingen}$. 
    Let $D(n, d) = \max\{2 d, N(2 n, d)\}$, where $N(2 n , d)$ refers to the
    uniform bound in Theorem \ref{theorem:finetale}.
    Then $\varphi|_C$ is an
    isomorphism for $p > D(n, d)$ by Theorem \ref{theorem:finetale}
    and finally we conclude that $\varphi$ is an isomorphism
    by Corollary \ref{corollary:WeylAlgp}$\ref{item:varphiiso}$.
\end{proof}

To go to characteristic zero we need the following characterization of (left) generating sets.

\begin{lemma}\label{SAtoAS}
  Let $R$ be a commutative ring, $A = A_n(R)$, $\varphi: A\rightarrow A$ 
  an $R$-algebra endomorphism and $S = \varphi(A)$.
Suppose that $G = \{g_1, \dots, g_m\}\subset A$ satisfies
\begin{enumerate}[(i)]
\item $1\in G$
\item There exists $s_{ijl}, t_{ijl}\in S$ , such that
\begin{align}
g_j x_i &= s_{ij1} g_1 + \cdots + s_{ijm} g_m\label{xvar}\\
g_j \partial_i &= t_{ij1} g_1 + \cdots + t_{ijm} g_m\label{dvar}
\end{align}
for $l = 1, \dots, m$ and $1\leq i, j\leq n$. 
\end{enumerate}

Then
$G$ generates $A$ as a left $S$-module i.e.,
$$
A = S g_1 \cdots + S g_m.
$$  
Conversely if $G = \{g_1, \dots, g_m\}$ is a generating set for $A$ as a left $S$-module
with $1\in G$, then $G$ satisfies \eqref{xvar} and \eqref{dvar} above.
\end{lemma}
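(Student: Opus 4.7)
The plan is to prove both directions by direct verification, with the forward implication being the substantive part and the converse essentially tautological. For the forward direction I would set $M = Sg_1 + \cdots + Sg_m$, the left $S$-submodule of $A$ generated by $G$, and aim to show $M = A$. Since $\varphi$ is an $R$-algebra endomorphism, $R \subseteq S$, so $M$ is in particular an $R$-submodule of $A$; condition (i) gives $1 \in M$, hence $S \subseteq M$.

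The key step is to observe that $M$ is stable under right multiplication by each algebra generator $x_i$ and $\partial_i$ of $A$. Indeed, for any $s \in S$ equation \eqref{xvar} gives
$$
(sg_j)\, x_i \;=\; s \sum_{l=1}^m s_{ijl}\, g_l \;=\; \sum_{l=1}^m (s\, s_{ijl})\, g_l \;\in\; M,
$$
since each $s\, s_{ijl} \in S$, and analogously for $\partial_i$ via \eqref{dvar}. An induction on word length then shows that $M$ is stable under right multiplication by every word in $x_1,\dots,x_n,\partial_1,\dots,\partial_n$, and by $R$-linearity together with Proposition \ref{proposition:stdbasis} under right multiplication by every element of $A$. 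Applying this to $1 \in M$ yields $a = 1 \cdot a \in M$ for every $a \in A$, so $M = A$.

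The converse is a direct unpacking of definitions: if $G$ generates $A$ as a left $S$-module with $1 \in G$, then each element $g_j x_i \in A$ and each $g_j \partial_i \in A$ admits, by the defining property of a generating set, an expansion of precisely the form \eqref{xvar}, \eqref{dvar} with coefficients in $S$. I expect no serious obstacle here: the whole argument reduces to the elementary principle that a left $S$-submodule of $A$ which contains $1$ and is closed under right multiplication by an $R$-algebra generating set of $A$ must coincide with $A$, which holds as soon as $R \subseteq S$ so that the submodule is also an $R$-submodule. The only mild subtlety is that one must verify $R \subseteq S$ (automatic from the $R$-algebra hypothesis) in order to lift stability under the algebra generators to stability under arbitrary $R$-linear combinations of standard monomials.
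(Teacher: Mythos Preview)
Your proposal is correct and follows essentially the same approach as the paper: both set $M = Sg_1 + \cdots + Sg_m$ and use \eqref{xvar}, \eqref{dvar} to show that all standard monomials lie in $M$, with the converse immediate from the definition of a generating set. Your phrasing in terms of $M$ being stable under right multiplication by the algebra generators (hence by all of $A$, with $1\in M$ forcing $M=A$) is a clean abstraction of what the paper does more concretely by building up $x^u$ and then $x^u\partial^v$ inside $M$; the explicit remark that $R\subseteq S$ is a nice touch the paper leaves implicit.
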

\begin{proof}
Let $M = S g_1 + \cdots + S g_m$. We wish to prove that
$M = A$.
Using \eqref{xvar} we get that
$
x^u\in M
$
for every $u\in \NN^n$. Building on this, one shows using \eqref{dvar} that
$
x^u \partial^v\in M
$
for every $u, v\in \NN^n$. Therefore we must have $M = A$. The converse follows
by definition of a (left) generating set.
\end{proof}

\begin{theorem}\label{theorem:main}
  Let $K$ be a field of characteristic $0$, $A = A_n(K)$ and 
  $\varphi: A\rightarrow A$ a $K$-algebra endomorphism with $S = \varphi(A)$. If $A$ is finitely generated
as a left or right module over $S$, then $S = A$.
\end{theorem}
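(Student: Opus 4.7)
The plan is to spread $\varphi$ out over a finitely generated $\ZZ$-subdomain $R\subset K$, apply Lemma \ref{lemma:main} fiberwise in positive characteristic, then transfer the resulting surjectivity back to $K$ via ultraproducts, using the uniform degree bound of Theorem \ref{theorem:invbound} to turn the transfer into a finite linear system over $\mathrm{Frac}(R)$.

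First I reduce to the left-module case: the anti-automorphism $\sigma$ of $A_n(K)$ swapping $x_i\leftrightarrow\partial_i$ converts a right generating set for $A$ over $\varphi(A)$ into a left generating set for $A$ over $(\sigma\varphi\sigma^{-1})(A)$, and $\sigma\varphi\sigma^{-1}$ is still an endomorphism of degree at most $\deg(\varphi)$. So assume $A=\sum S g_j$. By Lemma \ref{SAtoAS} take $G=\{g_1,\dots,g_m\}$ with $1\in G$ and relations $g_j x_i=\sum_l \varphi(a_{ijl})g_l$, $g_j\partial_i=\sum_l \varphi(b_{ijl})g_l$. Let $R\subset K$ be a finitely generated $\ZZ$-subdomain containing all standard-basis coefficients of $\varphi(x_i),\varphi(\partial_i),g_j,a_{ijl},b_{ijl}$; then $\varphi$ restricts to $\varphi_R\colon A_n(R)\to A_n(R)$ and the generation relations hold inside $A_n(R)$. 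For every maximal ideal $\mathfrak{m}$ of the Jacobson ring $R$, $R/\mathfrak{m}$ is finite of characteristic $p>0$, the reduction $\varphi_\mathfrak{m}$ has degree at most $d:=\deg(\varphi)$, and Lemma \ref{SAtoAS} applied to the reduced relations exhibits $A_n(R/\mathfrak{m})$ as a finitely generated left $S_\mathfrak{m}$-module, hence a fortiori as a finitely generated bimodule ${}_{\varphi_\mathfrak{m}}A_n(R/\mathfrak{m}){}_{\varphi_\mathfrak{m}}$. Lemma \ref{lemma:main} then forces $\varphi_\mathfrak{m}$ to be an isomorphism whenever $p>D(n,d)$, and Theorem \ref{theorem:invbound} gives $\deg(\varphi_\mathfrak{m}^{-1})\leq d^{2n-1}$ uniformly.

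For each such $\mathfrak{m}$ pick preimages $a_i^{(\mathfrak{m})},b_i^{(\mathfrak{m})}\in A_n(R/\mathfrak{m})$ of $x_i,\partial_i$ of degree at most $d^{2n-1}$. Choose an ultrafilter $\cU$ on $\Specm(R)$ extending the filter $\cF$ of Proposition \ref{proposition:ultrafilterfield}: since $\cF$ contains $X_p$ for every prime $p$, $\cU$ concentrates on maximal ideals of residue characteristic $>D(n,d)$. The uniform degree bound makes $\alpha_i:=[a_i^{(\mathfrak{m})}]$ and $\beta_i:=[b_i^{(\mathfrak{m})}]$ well-defined elements of $A_n(K_\cU)$, where $K_\cU:=\uprod R/\mathfrak{m}$ contains $\mathrm{Frac}(R)$ by Proposition \ref{proposition:ultrafilterfield}. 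Applying Lemma \ref{lemma:redmodp} to the polynomial equations encoding $\varphi(\alpha)-x_i=0$ and $\varphi(\beta)-\partial_i=0$ in the bounded set of coefficients of $\alpha,\beta$, I obtain $\varphi_{K_\cU}(\alpha_i)=x_i$ and $\varphi_{K_\cU}(\beta_i)=\partial_i$ in $A_n(K_\cU)$. To descend back to $K$, observe that the existence of $\alpha$ of degree at most $d^{2n-1}$ with $\varphi(\alpha)=x_i$ is a finite inhomogeneous linear system in the unknown coefficients of $\alpha$, whose augmented matrix has entries in $\mathrm{Frac}(R)$ (they arise from the standard-basis expansion of products of $\varphi(x_j),\varphi(\partial_j)$, which live in $R$). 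Solvability of such a system is invariant under field extensions, so solvability over $K_\cU$ forces solvability over $\mathrm{Frac}(R)\subseteq K$; hence $x_i,\partial_i\in S$ for every $i$ and $S=A$.

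The main obstacle is the descent from $K_\cU$ back to $K$, which is only feasible because Theorem \ref{theorem:invbound} bounds the degree of preimages uniformly across fibers. Without such a bound the ultraproducts $\alpha_i$ would not live in $A_n(K_\cU)$ and no bounded-rank linear system over $\mathrm{Frac}(R)$ would be available; the degree bound is precisely what turns the a priori transcendental surjectivity problem into a first-order linear condition over $\mathrm{Frac}(R)$ that propagates from $K_\cU$ to $K$ by standard field-extension invariance.
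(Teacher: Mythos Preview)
Your proof is correct and follows essentially the same route as the paper: spread out to a finitely generated $\ZZ$-subdomain, reduce modulo maximal ideals, apply Lemma \ref{lemma:main} together with the degree bound of Theorem \ref{theorem:invbound}, and transfer back through the ultraproduct of Proposition \ref{proposition:ultrafilterfield}. The only visible difference is in the final descent from $K_\cU$ to $K$: where the paper simply invokes \cite[Lemma 3.3]{LT}, you spell out the underlying mechanism, namely that the existence of a bounded-degree preimage is a finite inhomogeneous linear system over $\mathrm{Frac}(R)$ whose solvability is preserved under field extension. This is exactly the content of the cited lemma, so the arguments coincide; your version has the merit of being self-contained, and your explicit handling of the right-module case via the anti-involution $x_i\leftrightarrow\partial_i$ is a cleaner justification than the paper's ``similar''.
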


\begin{proof}
  Assume that $A$ is generated by $G = \{g_1, \dots, g_m\}$ as a left $S$-module (the case of a right 
  $S$-module is similar) with $1\in G$.
  We start by fixing a finitely generated $\ZZ$-algebra $R'$ of $K$ generated by the 
  coefficients of $\varphi(x_i), \varphi(\partial_i)$ for $i = 1, \dots, n$ in the basis of standard
  monomials (cf.~Definition \ref{definition:stddegree}). In the same way we extend $R'$ to a finitely generated
  $\ZZ$-algebra $R$, such that the coefficients of $g_1, \dots, g_m$ and $s_{ijk}, t_{ijk}$ of Lemma \ref{SAtoAS} 
  are in $R$. For $\mathfrak{m}\in \Specm(R)$, $R/\mathfrak{m}$ is a field of characteristic $p>0$ and
  it follows by reduction modulo $\mathfrak{m}$ of \eqref{xvar} and \eqref{dvar} 
  that ${}_{\bar{\varphi}}A_n(R/\mathfrak{m})_{\bar{\varphi}}$ 
  is finitely generated as a left module and therefore as a bimodule, where 
  $\bar{\varphi}: A_n(R/\mathfrak{m})\rightarrow A_n(R/\mathfrak{m})$ denotes the reduction of $\varphi$ 
  modulo $\mathfrak{m}$.

  Now let $D = D(n, \deg(\varphi))$ be the bound from Lemma \ref{lemma:main}. 
  Let $M$ be the product of the prime numbers $\leq D$. Fix an ultrafilter $\cU$
  on $\Specm(R)$
  as in Proposition \ref{proposition:ultrafilterfield}. Then 
  $\bar{\varphi}$ is an isomorphism for every $\mathfrak{m}\in X_M$ (with the notation in Proposition 
  \ref{proposition:ultrafilterfield}) by Lemma \ref{lemma:main} and 
  Theorem \ref{theorem:invbound} along with Lemma \ref{lemma:redmodp} show that $\varphi$ 
  is an isomorphism over the 
  ultraproduct $\prod_\cU R/\mathfrak{m}$. Therefore $\varphi$ is an isomorphism over $K$ by
  \cite[Lemma 3.3]{LT}.
\end{proof} 

The last part of the paper concerns the proof of Theorem \ref{theorem:finetale} below, 
which is the key result needed in the proof of Lemma \ref{lemma:main} (and therefore Theorem \ref{theorem:main})
above.

\section{Bounds on polynomial endomorphisms}\label{section:polynomialring}

In this section we deal exclusively with commutative rings and introduce new notation independent of the notation in the
previous sections i.e., $A$ no longer refers to the Weyl algebra.

Let $B = K[x_1, \dots, x_n]$ be the polynomial ring 
of $n$ variables over an arbitrary field $K$ (with $n>0$), 
$\varphi:B\rightarrow B$ a $K$-algebra homomorphism
given by $\varphi(x_i) = f_i$ and $\varphi(B)=:A\subset B$. Denote
the fields of fractions of $A$ and $B$ by $K(A)$ and $K(B)$ respectively.
We let 
\begin{equation}\label{eq:degcommutativephi}
\deg(\varphi) = \max\{\deg(f_1), \dots, \deg(f_n)\}.
\end{equation}
where  $\deg(f_i)$ denotes the total degree of 
$f_i$. For $d\in \NN$ we let 
$B_{\leq d}$ denote the $K$-subspace of $B$ consisting of polynomials 
$f$ with $\deg(f) \leq d$. 

\begin{lemma}
\label{lemmabounds}
Let $d, r\in \NN$, with $r>0$, and 
$h_1,h_2,\dots,h_r \in B_{\leq d}$. Suppose that $m> n+d$
is an integer and that $h_1, \dots, h_r$
do not satisfy a non-trivial linear relation
$$
\varphi(b_1) h_1 + \cdots + \varphi(b_r) h_r = 0
$$
with $b_i\in B_{\leq m}$ for $i = 1, \dots, r$.
Then
$$ m\, (r-\deg(\varphi)^n)  <   2^n \deg(\varphi)^{n-1} (n+d).$$  
\end{lemma}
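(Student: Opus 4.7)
The plan is to recognize the hypothesis as an injectivity statement, extract a dimension inequality, and then estimate. Write $e = \deg(\varphi)$, and (noting the trivial case $e=0$ separately) consider the $K$-linear map
$$\Phi : B_{\leq m}^{\oplus r} \longrightarrow B_{\leq em+d}, \qquad (b_1,\dots,b_r) \longmapsto \sum_{i=1}^{r} \varphi(b_i)\, h_i,$$
whose image lies in $B_{\leq em+d}$ because $\deg \varphi(b_i) \leq e m$ and $\deg h_i \leq d$. The hypothesis says exactly that $\Phi$ is injective, so comparing $K$-dimensions gives
$$r\binom{m+n}{n} \;\leq\; \binom{em+d+n}{n},$$
equivalently $r \leq \prod_{i=1}^n (em+d+i)/(m+i)$.

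The rest should be elementary estimation. I would rewrite each factor as $e + (d-(e-1)i)/(m+i)$; using $(e-1)i \geq 0$ and $m+i \geq m+1$, this is at most $e + d/(m+1)$. Pulling out $e^n$ puts the bound in the form
$$r \;\leq\; e^n\!\left(1 + \frac{d}{e(m+1)}\right)^{\!n}.$$
Since $m > n+d$ forces $d/(e(m+1)) < 1$, the elementary estimate $(1+x)^n \leq 1 + 2^n x$ for $0 \leq x \leq 1$ (proved by noting $x^k \leq x$ for $k \geq 1$ and $\sum_{k=1}^n\binom{n}{k} < 2^n$) then yields $r - e^n \leq 2^n e^{n-1} d/(m+1)$. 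Multiplying by $m$ and absorbing $m/(m+1) < 1$ and $d \leq n+d$ delivers the claim.

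The only subtlety I anticipate is engineering the exponent $e^{n-1}$ (rather than $e^n$) on the right-hand side: a naive uniform estimate of $(em+d+i)/(m+i)$ costs an extra factor of $e$ and produces only $2^n e^n(n+d)$. The trick is to factor out $e^n$ \emph{before} applying the binomial bound, so that each ``error'' contribution is measured relative to $e$ and the overshoot is of order $e^{n-1}$. The edge case $e = 0$ is vacuous: $x_1 - \varphi(x_1) \in B_{\leq m} \setminus \{0\}$ lies in $\ker\varphi$ and immediately yields a nontrivial relation, violating the hypothesis.
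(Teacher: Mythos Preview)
Your proof is correct and follows essentially the same approach as the paper: both recognize the hypothesis as injectivity of the map $(b_1,\dots,b_r)\mapsto \sum \varphi(b_i)h_i$, derive the dimension inequality $r\binom{m+n}{n}\leq \binom{em+d+n}{n}$, and finish by elementary estimation. The only difference is cosmetic: the paper bounds the binomials crudely by $m^n/n!$ and $(em+d+n)^n/n!$ and then expands $(em+d+n)^n$ via the binomial theorem using $d+n<me$, whereas you bound the exact ratio $\prod_{i=1}^n(em+d+i)/(m+i)$ factor by factor and then apply $(1+x)^n\le 1+2^n x$; both routes isolate the $e^{n-1}$ in the same way.
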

\begin{proof}
Notice first that $\deg(\varphi)>0$ with the given assumptions. 
Let $V$ denote the subspace 
$$
V = \left\{ \varphi(b_1) h_1 + \cdots + \varphi(b_r) h_r\,\, \bigg|\,\, b_1, \dots, b_r\in B_{\leq m} \right\} \subseteq B_{\leq m \deg(\varphi)+d}.
$$
By 
assumption
$$ \dim_K(V) = r \, \dim_K(B_{\leq m})\leq \dim_K(B_{\leq m \cdot 
\deg(\varphi) + d}).$$
Therefore
$$ r \frac{m^n}{n!} \leq  r \binom{m+n}{n} \leq \binom{m \, \deg(\varphi)+d+n}{n} \leq \frac{(m \, \deg(\varphi)+d+n)^n}{n!}$$
showing that
\begin{equation}\label{ineq1}
      r m^n  \leq (m \, \deg(\varphi)+d+n)^n < (m \, \deg(\varphi))^n + 2^n (m \, \deg(\varphi))^{n-1} (d+n),
\end{equation}
where the last inequality follows by the assumption $d+n< m$ and as 
$\deg(\varphi)>0$. The claimed inequality now follows from \eqref{ineq1}. 
\end{proof}

In the following we will assume that $f_1, f_2, \dots, f_n$ are 
algebraically independent over $K$. The induced field extension
 $K(A) \subseteq K(B)$  is then
finite.

\begin{proposition} 
\label{proposition1}
$$[K(B):K(A)]\leq \deg(\varphi)^n$$
\end{proposition}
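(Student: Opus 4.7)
The plan is to use Lemma \ref{lemmabounds} in a direct degree-counting argument, taking $r$ to be the size of an $K(A)$-linearly independent set in $B$ and letting the auxiliary parameter $m$ tend to infinity.

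Since $f_1, \dots, f_n$ are algebraically independent over $K$, the map $\varphi: B \to A$ is an isomorphism of rings. Set $N = [K(B):K(A)]$, which is finite by assumption, and choose a $K(A)$-basis $\beta_1, \dots, \beta_N$ of $K(B)$. By clearing denominators (multiplying through by a suitable nonzero element of $A$), we may assume $h_i := \beta_i \in B$ for every $i$. The $h_i$ remain linearly independent over $K(A)$, hence over $A$. Let $d = \max_i \deg(h_i)$.

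I now verify that the $h_i$ satisfy the hypothesis of Lemma \ref{lemmabounds} for every $m$. Any relation of the form
$$\varphi(b_1)h_1 + \cdots + \varphi(b_N)h_N = 0$$
with $b_i \in B_{\leq m}$ not all zero would, by the injectivity of $\varphi$, yield a nontrivial $A$-linear (hence $K(A)$-linear) relation among the $h_i$, contradicting our choice. Applying Lemma \ref{lemmabounds} with $r = N$ and any integer $m > n + d$ therefore gives
$$m\,(N - \deg(\varphi)^n) < 2^n \deg(\varphi)^{n-1}(n+d).$$

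The right-hand side is a constant independent of $m$, while the left-hand side is linear in $m$ with slope $N - \deg(\varphi)^n$. Letting $m \to \infty$, if $N > \deg(\varphi)^n$ the inequality fails, a contradiction. Hence $N \leq \deg(\varphi)^n$, as required. The only mildly delicate point is the translation between $K(A)$-linear independence in $K(B)$ and the absence of polynomial-coefficient relations demanded by the lemma; this is exactly where the algebraic independence of $f_1, \dots, f_n$ (i.e., injectivity of $\varphi$) is used.
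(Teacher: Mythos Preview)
Your proof is correct and follows essentially the same approach as the paper's: choose a $K(A)$-basis of $K(B)$, clear denominators to land in $B$, and apply Lemma~\ref{lemmabounds} for all large $m$ to force $N \leq \deg(\varphi)^n$. One trivial slip: to clear the denominators of elements of $K(B)$ you should multiply by a common nonzero element of $B$, not of $A$; this does not affect the argument, since multiplying a $K(A)$-independent family by a fixed nonzero element of $K(B)$ preserves $K(A)$-independence.
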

\begin{proof}
Let $h_1, h_2, \dots, h_r$ denote a basis for 
$K(B)$ as a vector space over 
$K(A)$. Clearing denominators 
we may assume that each $h_i$ is contained in 
$B$. Choose $d$ such that $h_i \in B_{\leq d}$, for each $i$.
Applying Lemma \ref{lemmabounds} we then conclude
that
$$(r-\deg(\varphi)^n) \cdot m <   2^n \deg(\varphi)^{n-1} (n+d),$$
for every positive integer $m>n+d$. Therefore
$r-\deg(\varphi)^n\leq 0$ and $r\leq \deg(\varphi)^n$.
\end{proof}

\subsection{The classical bound on the degree of the inverse}

We will give a short and elementary proof of the following classical result inspired by
ideas in \cite{Yu}.

\begin{theorem}[\cite{Bass}, Thm. 1.5]\label{theorem:Gabber}
If $\varphi$ is an automorphism of $B$, then $\deg(\varphi^{-1}) 
\leq \deg(\varphi)^{n-1}$.
\end{theorem}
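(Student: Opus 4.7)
Write $d = \deg(\varphi)$, $\psi = \varphi^{-1}$, and $g_i = \psi(x_i)$; the goal is $\max_i \deg(g_i) \leq d^{n-1}$. Since $\varphi$ is an automorphism, the images $f_1, \dots, f_n$ are algebraically independent generators of $B$, so Proposition~\ref{proposition1} applied directly to $\varphi$ gives only the trivial bound $[K(B):K(A)] = 1$. The plan is instead to apply the dimension count underlying Lemma~\ref{lemmabounds} to a modified map in which one of the generators is replaced by a polynomial of degree one; this shaves one factor of $d$ off the exponent.

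First, I would choose a linear form $y = \sum_k a_k x_k \in B_{\leq 1}$ such that $f_1, \dots, f_{n-1}, y$ are algebraically independent over $K$. Because $\varphi$ is an automorphism, $\det(\partial f_i/\partial x_k)$ is a non-zero constant, so not all cofactors of its last row can vanish; since $K$ has characteristic zero and is hence infinite, generic $a \in K^n$ realizes the required independence. Let $\varphi' : K[z_1, \dots, z_n] \to B$ be the injective map sending $z_i \mapsto f_i$ for $i < n$ and $z_n \mapsto y$, and set $A' := \varphi'(K[z])$. I would then adapt the dimension-counting proof of Lemma~\ref{lemmabounds} using on the source the weighted-degree filtration in which $z_i$ carries weight $\deg(\varphi'(z_i))$---that is, weight at most $d$ for $i < n$ and weight $1$ for $z_n$. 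The subspace of polynomials of weighted degree at most $m$ has dimension of order $m^n/(n!\, d^{n-1})$, and the Proposition~\ref{proposition1} argument then yields $[K(B):K(A')] \leq d^{n-1}$.

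Second, I would translate this field-extension bound into the polynomial-degree bound on $g_j$. Since $K(B) = K(A')(f_n)$ with $f_n$ algebraic over $K(A')$ of degree at most $d^{n-1}$, the minimal polynomial of $f_n$ over $K(A')$, once denominators are cleared, produces expressions for each $x_j$ as a polynomial in $f_1, \dots, f_n$; uniqueness of such expressions (by algebraic independence of the $f_i$) identifies these with $g_j(f_1, \dots, f_n)$, and tracking the total degrees of the coefficients should give $\deg(g_j) \leq d^{n-1}$.

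\textbf{Main obstacle.} The weighted variant of Lemma~\ref{lemmabounds} is a routine modification of the given proof. The substantive obstacle is the second step: converting the field-extension bound into the sharp total-degree bound on $g_j$. A naive per-variable argument (varying which position the linear form $y$ occupies) gives only $\deg(g_j) \leq n \cdot d^{n-1}$, which misses the target by a factor of $n$. Obtaining the sharp exponent seems to require a global argument exploiting the explicit form of the minimal polynomial of $f_n$ over $K(A')$ together with careful tracking of coefficient degrees during the denominator clearing; this is where Yu's ideas would enter decisively and is the main technical hurdle.
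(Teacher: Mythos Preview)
Your first step (the weighted variant of Lemma~\ref{lemmabounds}) is fine and does yield $[K(B):K(f_1,\dots,f_{n-1},y)]\le d^{n-1}$. One correction: $K$ is an arbitrary field here, not of characteristic zero; one only needs $K$ infinite, which can be arranged by extending scalars. The gap is exactly where you place it: converting this bound into $\deg g_j\le d^{n-1}$. The field-extension bound controls only the $f_n$-degree of a minimal relation over $K(f_1,\dots,f_{n-1},y)$, not the total degrees of its coefficients, and your outline contains no mechanism for recovering the sharp constant from this---as your own $n\cdot d^{n-1}$ estimate already shows the method is lossy. Without a concrete argument for this step, the proof is incomplete.

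The paper avoids the conversion step altogether by passing to a \emph{quotient} rather than adjoining a linear form. After a linear change of coordinates making each $g_i$ monic in $x_n$ with $\deg g_n=N:=\deg(\varphi^{-1})$, one reduces modulo $x_n$. Then $\overline{f_1},\dots,\overline{f_{n-1}}$ are algebraically independent in $B':=K[x_1,\dots,x_{n-1}]$, and the crucial observation is that under the isomorphism $B\to K[\overline{f_1},\dots,\overline{f_{n-1}}][X]$ sending $x_i\mapsto\overline{f_i}$ for $i<n$ and $x_n\mapsto X$, the polynomial $g_n$ itself becomes the minimal polynomial of $\overline{f_n}$ over $K(\overline{f_1},\dots,\overline{f_{n-1}})$: it has $\overline{f_n}$ as a root, and it is irreducible because $g_n$ is (since $\varphi$ is an automorphism). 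Thus $N=[K(B'):K(\overline{f_1},\dots,\overline{f_{n-1}})]$, and Proposition~\ref{proposition1} applied in $n-1$ variables gives $N\le d^{n-1}$ immediately. The degree of $\varphi^{-1}$ \emph{is} a field-extension degree from the outset, so nothing has to be recovered after the fact.
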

\begin{proof}
Suppose that $\varphi^{-1}(x_i) = g_i$ for $i=1, \dots, n$. 
We may assume that $K$ is an infinite field.
Thus by
a linear change of coordinates we can assume, that each
$g_i$ is a monic polynomial in $x_n$ with coefficients 
in the subring $B'=K[x_1,x_2,\dots,x_{n-1}]$ of $B$.
Let $N$ denote the degree of $\varphi^{-1}$ and assume that $\deg(g_n)=N$. 
Now identify $B'$ with the 
quotient ring $B/(x_n)$ and use the notation $\overline{h}$ for the
image of $h\in B$ in $B/(x_n)$. 

Since $\varphi$ is an automorphism,
$B=K[f_1,f_2,\dots,f_n]$ and $B'=K[\overline{f_1},
\overline{f_2}, \dots, \overline{f_{n}}]$. We claim 
that  $\overline{f_1},\overline{f_2}, \dots, \overline{f_{n-1}}$
are algebraically independent over $K$ and that 
$$[K(x_1,x_2,\dots,x_{n-1}) : K(\overline{f_1},\overline{f_2}, \dots, \overline{f_{n-1}})] = \deg(\varphi^{-1}).$$
By Proposition \ref{proposition1} this will end the proof, as 
$\deg(\overline{f_i}) \leq \deg(\varphi)$. Write 
$$ g_n = a_0 + a_1 x_n + a_2 x_n^2 + \cdots + a_{N-1} x_n^{N-1}
+ x_n^N$$  
with $a_i \in B'$. Then 
$$ x_n = \varphi(g_n) = \varphi(a_0) + \varphi(a_1) f_n + 
\cdots + \varphi(a_{N-1}) f_n^{N-1} + f_n^N,$$
and 
\begin{equation}
\label{minimalfn}
c_0  + c_1\cdot \overline{f_n}
+ \cdots +  c_{N-1} \cdot \overline{f_n}^{N-1}
+ \overline{f_n}^N = 0.
\end{equation}
where $c_i = \overline{\varphi(a_i)}$ are elements in $A'=K[\overline{f_1},\overline{f_2}, \dots, \overline{f_{n-1}}]$.
It follows that $\overline{f_n}$ is integral over $A'$ and therefore that $B'=K[\overline{f_1},\overline{f_2}, \dots, 
\overline{f_{n}}]$ is integral over $A'$. We conclude, that the elements $\overline{f_1},\overline{f_2}, \dots, \overline{f_{n-1}}$ are algebraically independent and that 
$[K(x_1,x_2,\dots,x_{n-1}) : K(\overline{f_1},\overline{f_2}, \dots, \overline{f_{n-1}})] $
equals the degree of the minimal polynomial of $\overline{f_n}$
over $K(\overline{f_1},\overline{f_2}, \dots, \overline{f_{n-1}})$. 
Consider the polynomial 
$$ F=c_0  + c_1\cdot X
+ \cdots +  c_{N-1} \cdot X^{N-1}
+ X^N  \in A'[X].$$
By equation (\ref{minimalfn}) we see that $\overline{f_n}$ is
a root of $F$. So it suffices to prove that $F$ is irreducible
in $A'[X]$ (and thus also in $K(\overline{f_1},\overline{f_2}, 
\dots, \overline{f_{n-1}})[X]$). To see this  
observe first that $F$ is the image of
$g_n$ under the $K$-algebra automorphism 
$B \rightarrow A'[X]$ which maps $x_n$ to $X$ and 
$x_i$ to $\overline{f_i}$, for $i<n$. Then use that 
$g_n$ is irreducible as $\varphi$ is an automorphism. 
\end{proof}

\subsection{Integral extensions}

In the statement below we still work under the assumption 
that $f_1, f_2, \dots, f_n$ are algebraically independent.

\begin{proposition}
\label{proposition2}
Let $g$ denote an element in $B$ of degree $D$ which is integral 
over $A$, and let 
$$  
G = a_0 + a_1 T + a_2 T^2 + \cdots + a_{r-1} T^{r-1} + T^r
\in K(A)[T],$$
denote its minimal polynomial. Then 
$a_i\in \varphi(B_{\leq m})$ for $i=0, \dots, r-1$, where
$$
m = 2^n \deg(\varphi)^{n-1} (n + D \cdot \deg(\varphi)^n).
$$
\end{proposition}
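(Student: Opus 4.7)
The plan is to invoke Lemma~\ref{lemmabounds} with the $\deg(\varphi)^n+1$ witnesses $h_i = g^{i-1}$ for $i = 1, \ldots, \deg(\varphi)^n+1$, so that in the lemma $r = \deg(\varphi)^n+1$ and $d = D\deg(\varphi)^n$. The displayed $m$ in the proposition equals $2^n\deg(\varphi)^{n-1}(n+d)$, and $m > n+d$ holds since $2^n\deg(\varphi)^{n-1} \geq 2$. The strict inequality predicted by the lemma would read $m = m(r-\deg(\varphi)^n) < 2^n\deg(\varphi)^{n-1}(n+d) = m$, which is impossible, so its hypothesis must fail, yielding a non-trivial relation $\sum_{i=0}^{\deg(\varphi)^n}\varphi(b_i)\,g^i = 0$ with every $b_i \in B_{\leq m}$.

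I would then encode this as $P(T) = \sum_i b_i T^i \in B[T]$. Because $A = K[f_1,\ldots,f_n]$ is a polynomial ring it is integrally closed, so all coefficients of the minimal polynomial $G$ of $g$ already lie in $A$; by injectivity of $\varphi$ we may lift $G$ uniquely to $\hat G = T^r + c_{r-1}T^{r-1} + \cdots + c_0 \in B[T]$ with $\varphi(c_i) = a_i$. Applying $\varphi$ coefficient-wise to $P$ we have $\sum \varphi(b_i) g^i = 0$, so $G$ divides the image of $P$ in $K(A)[T]$, and by Gauss' lemma (monic $G$) also in $A[T]$. Pulling back through the injective coefficient-wise map $B[T]\hookrightarrow A[T]$ gives a factorization $P = \hat G \cdot \hat Q$ in $B[T]$ with $\hat Q \neq 0$ (else $P = 0$).

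The crucial step is then a one-line degree comparison. Let $\deg_x$ on $B[T]$ denote the maximum total degree in $x_1,\ldots,x_n$ of the $T$-coefficients (equivalently, the degree after forgetting $T$). Because $B[T] = K[x_1,\ldots,x_n,T]$ is a polynomial ring, and in particular an integral domain, the $x$-leading forms of two nonzero elements multiply to something nonzero; hence $\deg_x$ is additive on nonzero products. Therefore $\deg_x(\hat G) + \deg_x(\hat Q) = \deg_x(P) \leq m$, and since $\hat Q \neq 0$ this forces $\max_{0\leq i<r}\deg(c_i) = \deg_x(\hat G) \leq m$, i.e.\ $c_i \in B_{\leq m}$, equivalently $a_i \in \varphi(B_{\leq m})$, for every $i < r$. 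I do not expect any genuine obstacle; the only bookkeeping point is that the divisibility proved in $A[T]$ lifts to $B[T]$, which is automatic from the injectivity of $\varphi$ combined with $\hat G$ being monic.
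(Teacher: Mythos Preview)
Your argument is correct and follows essentially the same route as the paper's proof: apply Lemma~\ref{lemmabounds} with $r=\deg(\varphi)^n+1$ and $d=D\deg(\varphi)^n$ to obtain a nontrivial relation $\sum_i \varphi(b_i)g^i=0$ with $b_i\in B_{\leq m}$, use integral closedness of $A$ (the paper cites \cite[Proposition~5.15]{atiyah1969introduction}) to get $G\in A[T]$ dividing the resulting polynomial, and then bound the degrees of the coefficients of $G$ by a degree-additivity argument in a polynomial ring. The only cosmetic difference is that the paper carries out the final degree comparison directly in $A[T]$, viewing $A$ as the polynomial ring $K[f_1,\dots,f_n]$, whereas you pull everything back through the isomorphism $\varphi:B\to A$ and work in $B[T]$; these are the same computation under $\varphi$.
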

\begin{proof}

Applying Lemma \ref{lemmabounds} with $r = \deg(\varphi)^n+1, d = D \deg(\varphi)^n$ and $m$ as above, we conclude that there exists 
$b_i \in B_{\leq m}$, for $i=0,1,\dots,\deg(\varphi)^n$ with
$$
\varphi(b_0)  + \varphi(b_1) g + \cdots + \varphi(b_{\deg(\varphi)^n}) g^{\deg(\varphi)^n} = 0,
$$
such that not all $b_i$ are zero. The polynomial 
$$ 
F= \varphi(b_0)   + \varphi(b_1) T + \cdots + \varphi(b_{\deg(\varphi)^n}) T^{\deg(\varphi)^n} \in A[T]$$
is thus divisible by $G$ i.e., $F = G \cdot H $ for some polynomial 
$H$ in $K(A)[T]$. By \cite[Proposition 5.15]{atiyah1969introduction}, $G\in A[T]$. Therefore
we may even conclude that $H \in A[T]$. Consider now $F, G$ and $H$
as polynomials in $f_1,f_2,\dots f_n$ with coefficients in $K[T]$.
Then the (total) degree of $F$ is $\leq m$ which implies, that the
same is true for its divisor $G$. This ends the proof. 
\end{proof}

Recall that the ring homomorphism $\varphi: B \rightarrow B$ is
called integral if $B$ is integral over the image $A=\varphi(B)$.
In the present setup $\varphi$ can only by integral if $f_1, f_2,
\dots, f_n$ are algebraically independent. Combining this observation
with Proposition \ref{proposition1} and Proposition \ref{proposition2}
above we find:

\begin{theorem}\label{theorem:finiteextension}
Let $m = 2^n \deg(\varphi)^{n-1} (n+\deg(\varphi)^n)$. The
following statements are equivalent.
\begin{enumerate}[(i)] 
    \item The ring homomorphism   $\varphi: B \rightarrow B$ 
    is integral.
    \item There exists nonzero monic polynomials $F_i \in A[T]$,
for $i=1,2,\dots,n$, of degree $\deg(\varphi)^n$ and with 
coefficients in $\varphi(B_{\leq m})$, such that $F_i(x_i)=0$.
\end{enumerate}
 \begin{proof}
The second statement clearly implies the first statement. So
assume that $A \subseteq B$ is an integral extension. By 
Proposition \ref{proposition2} we know that the minimal polynomial
$G_i$ for $x_i$ has coefficients in $\varphi(B_{\leq m})$ for each $i$. 
Moreover, by Proposition \ref{proposition1} these minimal 
polynomials have degree $\leq \deg(\varphi)^n$. Multiplying 
$G_i$ by a suitable power of $T$ we obtain a polynomial 
$F_i$ of degree $\deg(\varphi)^n$ which satisfies the statement
in (ii).
 \end{proof}
\end{theorem}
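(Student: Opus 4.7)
The plan is to treat the two implications separately, with (ii) $\Rightarrow$ (i) being essentially immediate and (i) $\Rightarrow$ (ii) being the substantive direction that simply bundles together Proposition \ref{proposition1} and Proposition \ref{proposition2}.

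For (ii) $\Rightarrow$ (i): Since each generator $x_i$ of $B$ over $K$ satisfies a nonzero monic polynomial $F_i\in A[T]$, each $x_i$ is integral over $A$. As the integral closure of $A$ in $B$ is a subring containing $K$ and all the $x_i$, it equals $B$, so $\varphi$ is integral.

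For (i) $\Rightarrow$ (ii): First I would justify that $f_1,\dots,f_n$ are algebraically independent over $K$, which is the hypothesis under which Propositions \ref{proposition1} and \ref{proposition2} are stated. This holds because $B$ has transcendence degree $n$ over $K$, and an integral extension preserves transcendence degree, so $A = K[f_1,\dots,f_n]$ must have transcendence degree $n$ as well. Next, each $x_i\in B$ is integral over $A$ (since $\varphi$ is integral) and has degree $D=1$, so I apply Proposition \ref{proposition2} to $g = x_i$: this gives that the minimal polynomial $G_i\in K(A)[T]$ of $x_i$ over $K(A)$ actually has coefficients in $\varphi(B_{\leq m})$, for $m = 2^n\deg(\varphi)^{n-1}(n+\deg(\varphi)^n)$, which is precisely the bound in the statement. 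Then Proposition \ref{proposition1} gives $\deg(G_i) \leq [K(B):K(A)] \leq \deg(\varphi)^n$.

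The last step is to promote $G_i$ to a polynomial of degree exactly $\deg(\varphi)^n$: set $F_i = T^{\deg(\varphi)^n - \deg(G_i)}\cdot G_i$, which is still monic, still has $F_i(x_i)=0$, and whose coefficients are either coefficients of $G_i$ (hence in $\varphi(B_{\leq m})$) or zero (which lies in $\varphi(B_{\leq m})$ trivially). No step looks genuinely hard; the only mild point to verify carefully is that padding with a power of $T$ does not disturb the coefficient condition, and that Proposition \ref{proposition2}'s bound specializes to exactly the $m$ stated in the theorem when $D=1$.
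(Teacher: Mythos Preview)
Your proposal is correct and follows essentially the same route as the paper: both directions are handled identically, with (i) $\Rightarrow$ (ii) obtained by applying Proposition~\ref{proposition2} (with $D=1$) and Proposition~\ref{proposition1} to the minimal polynomial of each $x_i$, then padding by a power of $T$. Your added remarks about algebraic independence of the $f_i$ and the harmlessness of the $T$-padding are useful clarifications but do not constitute a different argument.
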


The result below is a key component in the proof of Lemma \ref{lemma:main} used in the proof of 
 Theorem \ref{theorem:main}.

\begin{theorem}\label{theorem:finetale}
Let $n, D\in \NN$ be positive. There exists a uniform bound $N(n,D)\in \NN$ only depending on $n$ and $D$, such that 
when $k$ is a field of characteristic $p>N(n,D)$ and $\varphi$ is a finite and \'etale $k$-algebra endomorphism of the polynomial ring $k[x_1, \dots, x_n]$
of degree $\leq D$, then $\varphi$ is an automorphism. 
 \end{theorem}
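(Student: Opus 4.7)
The plan is to argue by contradiction via ultraproducts. Assume no such $N(n,D)$ exists, so there is an infinite set $I$ of primes such that for each $p\in I$ there is a field $k_p$ of characteristic $p$ and a finite \'etale $k_p$-algebra endomorphism $\varphi_p$ of $B_p:=k_p[x_1,\dots,x_n]$ of degree $\leq D$ which is not an automorphism. I would assemble the $\varphi_p$ into a single endomorphism $\Phi$ of $K[x_1,\dots,x_n]$ over a characteristic-zero field $K$, verify $\Phi$ is still finite and \'etale, and then contradict this using the simply-connectedness of $\An$ in characteristic zero together with Theorem \ref{theorem:Gabber}.

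For the setup, fix an ultrafilter $\cU$ on $I$ containing the cofinite subsets and set $K=\prod_\cU k_p$. Since each fixed prime $q$ lies in $k_p^*$ for every $p\in I$ with $p\neq q$, $K$ is a field of characteristic zero. The polynomials $f_{i,p}:=\varphi_p(x_i)$ have uniformly bounded degree $\leq D$, so their coefficient-wise ultraproducts define $F_i \in K[x_1,\dots,x_n]_{\leq D}$ and hence a $K$-algebra endomorphism $\Phi$ with $\Phi(x_i)=F_i$.

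Next I would transfer ``finite'' and ``\'etale'' to $\Phi$. For finiteness, Theorem \ref{theorem:finiteextension} supplies, for each $\varphi_p$, polynomials $h_{l,s,p}\in B_p$ of degree $\leq m:=2^n D^{n-1}(n+D^n)$ satisfying $x_l^{D^n}+\sum_{s<D^n}\varphi_p(h_{l,s,p})x_l^s=0$ for $l=1,\dots,n$. Coefficient-wise ultraproducts of the $h_{l,s,p}$ yield $H_{l,s}\in K[x_1,\dots,x_n]_{\leq m}$, and applying Lemma \ref{lemma:redmodp} coefficient-by-coefficient to these identities gives $x_l^{D^n}+\sum_s\Phi(H_{l,s})x_l^s=0$ in $K[x_1,\dots,x_n]$, so $\Phi$ is integral, hence finite. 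For \'etaleness, each $\varphi_p$ has Jacobian equal to a nonzero constant $c_p \in k_p^*$, so by the same procedure the Jacobian of $\Phi$ is the constant $\prod_\cU c_p \in K^*$, a unit. Base-changing to $\bar K$, simply-connectedness of $\An_{\bar K}$ in characteristic zero forces the finite \'etale $\Phi_{\bar K}$ to be an automorphism, and hence so is $\Phi$ by faithfully flat descent.

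Finally, Theorem \ref{theorem:Gabber} bounds $\deg(\Phi^{-1})\leq D^{n-1}$. Writing $\Phi^{-1}(x_i)=G_i$ and unfolding the coefficients of each $G_i$ as sequences of polynomials $g_{i,p}\in B_p$ of degree $\leq D^{n-1}$, Lemma \ref{lemma:redmodp} applied to the identities $\Phi(G_i)=x_i$ gives a set $B\in\cU$ on which $\varphi_p(g_{i,p})=x_i$ for all $i$. For such $p$ the image of $\varphi_p$ contains $x_1,\dots,x_n$, so $\varphi_p$ is surjective, and since any surjective endomorphism of a Noetherian commutative ring is injective, $\varphi_p$ is an automorphism, contradicting the choice of $I$. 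The main obstacle is that finiteness of $\Phi$ is not visibly a polynomial condition; what makes the transfer through the ultraproduct work is precisely that Theorem \ref{theorem:finiteextension} supplies uniformly bounded-degree algebraic witnesses for integrality, with Theorem \ref{theorem:Gabber} playing the analogous role for carrying ``is an automorphism'' back to the $\varphi_p$.
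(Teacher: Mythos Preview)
Your proposal is correct and follows essentially the same route as the paper: contradict by ultraproduct, transfer finiteness to the characteristic-zero endomorphism $\Phi$ via the uniform integrality witnesses of Theorem~\ref{theorem:finiteextension}, transfer \'etaleness via the Jacobian, conclude $\Phi$ is an automorphism, and push the inverse back through Lemma~\ref{lemma:redmodp}. The only cosmetic differences are that the paper normalizes the Jacobian to $1$ and cites \cite[(2.1) Theorem, (d)]{Bass} directly rather than phrasing it as simply-connectedness plus descent, and that the paper checks both composites $\varphi_i\circ\psi_i$ and $\psi_i\circ\varphi_i$ as polynomial identities, whereas you verify only surjectivity and then invoke the Noetherian surjective-implies-injective fact; your explicit use of Theorem~\ref{theorem:Gabber} to bound $\deg(\Phi^{-1})$ is not needed (any finite bound suffices for the transfer), but it does no harm.
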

\begin{proof}
Suppose that the theorem is wrong. Then we may find an infinite sequence of 
primes $p_1 < p_2 < \dots$  and corresponding fields $k_1, k_2, \dots$ of
characteristic $\kar(k_i)=p_i$ together with finite and \'etale $k_i$-algebra 
endomorphisms $\varphi_i$ of $k_i[x_1, \dots, x_n]$ of degree $\leq D$,
which are not automorphisms. For simplicity we may assume that the 
determinant of the Jacobian matrix of each $\varphi_i$ equals $1$.
 
Choose a non-principal ultrafilter $\cU$ on the set $\cP= \{p_1, p_2, \dots\}$
and let $C$ denote the ultraproduct   $\uprod k_i$. Let further   
$$
\varphi: C[x_1, \dots, x_n] \rightarrow C[x_1, \dots, x_n]
$$
denote the $C$-algebra endomorphism induced by $\varphi_i, i >0$. 
We claim that $\varphi$ is a finite and \'etale. That $\varphi$
is \'etale follows as the determinant of the Jacobian matrix of 
each $\varphi_i$, and hence also of $\varphi$, equals $1$. To
conclude that $\varphi$ is finite we apply Theorem 
\ref{theorem:finiteextension}. It suffices to prove that 
each $x_j$ is integral over the image of $\varphi$. By 
assumption $x_j$ is integral over the image of $\varphi_i$,
so by Theorem \ref{theorem:finiteextension} we may find 
polynomials $p_{i,r} \in k_i[x_1,\dots,x_n]$, for 
$r=1,2,\dots,D^n$, of degree $\leq m := 2^n D^{n-1}(n+D^n)$,
such that $x_j$ is a root of the monic polynomial
$$ P_i(T) =  T^{D^n} + \sum_{r=1}^{D^n} \varphi_i(p_{i,r}) T^{r-1}. $$
Let now $p_r \in C[x_1,\dots,x_n]$ denote the polynomial defined 
by the collection of polynomials $(p_{i,r})_{i >0}$. Then $x_j$ is a root
of the monic polynomial
$$ P(T) =  T^{D^n} + \sum_{r=1}^{D^n} \varphi(p_{r}) T^{r-1}, $$
and we conclude that $\varphi$ is finite.

Next observe that $C$ is a field of characteristic zero and thus 
$\varphi$ is an automomorphism by \cite[(2.1) Theorem, (d)]{Bass}
with inverse $\varphi^{-1}$ of some degree $D'$. Fix, for each $i>0$, 
a lift $\psi_i$ of $\varphi^{-1}$ to a $k_i$-algebra endomorphism of $k_i[x_1,\dots,x_n]$ of degree $\leq D'$. 
In this way $\varphi^{-1}$ is defined from $\psi_i$, for $i>0$, in 
the same way as $\varphi$ was defined from $\varphi_i$, for $i>0$.
That $\psi_i$ is an inverse to $\varphi_i$ is then a polynomial  
condition in the coefficients of $\psi_i$ and $\varphi_i$. These 
polynomial conditions are satisfied for $\varphi^{-1}$ and $\varphi$, and 
thus $\psi_i$ and $\varphi_i$ will be mutually inverses for every
prime $p_i$ in some element $B$ in $\cU$ by Lemma \ref{lemma:redmodp}. This is a contradiction
as none of the endomorphisms $\varphi_i$, $i>0$, are automorphisms.
\end{proof}

\subsection{A Gr\"obner basis approach}

Here we outline an alternative approach to obtaining bounds similar to the ones in 
Theorem \ref{theorem:finiteextension}. We keep the notation of the
previous section.

It suffices to prove that if $f_1, \dots, f_n\in K[x_1, \dots, x_n]$
are polynomials of degree $\leq d$, then
$x_i$ is integral over $K[f_1, \dots, f_n]$ for $i = 1, \dots, n$ if and only
if the minimal polynomial for each $x_i$ has the form
\begin{equation}\label{eq:GBmonpols}
T^m + a_{m-1}(f_1, \dots, f_n) T^{m-1} + \cdots+ a_1(f_1, \dots, f_n) T + a_0(f_1, \dots, f_n) = 0,
\end{equation}
for $m \leq M(n, d)$ and $\deg(a_j)\leq D(n, d)$ for $j = 0, \dots, m-1$,
where $M(n, d), D(n, d)\in \NN$ depend only on $n$ and $d$ (and not on $K$).  

We will need the following degree bound on Gr\"obner bases for
arbitrary term orderings.

\begin{theorem}[\cite{Dube}]
Let $K$ be a field and $I\subset K[x_1, \dots, x_n]$ an ideal generated 
by polynomials of degree $\leq d$. Then there exists a Gr\"obner basis $G$
of $I$ with respect to any term order, such that
$$
\deg(f) \leq 2 \left( \frac{d^2}{2} + d\right)^{2^{n-1}}
$$
for $f\in G$.
\end{theorem}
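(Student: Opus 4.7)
The plan is to follow Dubé's original combinatorial approach via cone decompositions of the polynomial ring modulo the initial ideal. First I would reduce to the case of a homogeneous ideal: introducing a homogenizing variable $x_0$ and homogenizing each generator to degree exactly $d$, I obtain a homogeneous ideal $I^h$ whose Gröbner basis (with respect to a suitably extended term order) dehomogenizes to a Gröbner basis of $I$, with degree changes bounded by constants depending only on $d$ and $n$. This way one avoids tracking leading forms versus leading monomials separately.

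Next I would pass to the initial monomial ideal $\operatorname{in}(I)$ with respect to the fixed term order. A set $G\subset I$ is a Gröbner basis if and only if its leading monomials generate $\operatorname{in}(I)$, and any minimal monomial generator of $\operatorname{in}(I)$ can be lifted by reduction to an element of $I$ of the same degree. It therefore suffices to bound the degrees of the minimal generators of the monomial ideal $\operatorname{in}(I)$ in terms of $n$ and $d$.

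The heart of the proof would be to construct an \emph{admissible cone decomposition} of the graded quotient $K[x_1,\dots,x_n]/\operatorname{in}(I)$ as a direct sum $\bigoplus_i h_i\cdot K[X_i]$, where each $h_i$ is a monomial and $X_i\subseteq\{x_1,\dots,x_n\}$. One builds this decomposition by recursion on the number of variables, at each step splitting the existing cones according to a compatibility condition tied to the Hilbert function of $\operatorname{in}(I)$. The degrees of the cone heads $h_i$ whose free part has dimension $k$ are controlled by the degrees of the cones in dimension $k-1$, leading to a recurrence of the shape $D(k)\leq c\cdot D(k-1)^2$ with $D(1)$ of order $d$. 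Unwinding this recurrence across $n$ variables yields the claimed double-exponential bound $2(d^2/2+d)^{2^{n-1}}$.

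The main obstacle, and the deepest part of Dubé's argument, lies in maintaining an admissible cone decomposition through each refinement: a naive split blows up the degrees uncontrollably, and one must choose which cones to split and in which order so that the Hilbert function of $\operatorname{in}(I)$ forces the right inequalities at each stage of the induction. This combinatorial bookkeeping, rather than any algebraic subtlety, is what drives the double-exponential growth, and Mayr--Meyer type examples for ideal membership show a bound of this shape is essentially unavoidable.
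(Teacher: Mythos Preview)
The paper does not give its own proof of this theorem: it is simply quoted as a result from \cite{Dube} and used as a black box in the Gr\"obner basis approach of Section~\ref{section:polynomialring}. There is therefore nothing in the paper to compare your argument against.

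That said, your sketch is a reasonable outline of Dub\'e's actual argument. A couple of points where the sketch is loose: the homogenization step is not really needed to control degree changes (dehomogenization cannot increase degrees, so the bound for $I^h$ passes directly to $I$); and Dub\'e's construction builds compatible exact cone decompositions of \emph{both} $\operatorname{in}(I)$ and $K[x_1,\dots,x_n]/\operatorname{in}(I)$, using the Macaulay bound on the Hilbert function to control the recursion, rather than only decomposing the quotient. The recurrence you describe is morally right, but the actual inequalities Dub\'e derives are more delicate than a clean $D(k)\le c\,D(k-1)^2$; the constant $2$ and the base $d^2/2+d$ come out of a careful analysis of how the Macaulay representation of the Hilbert function interacts with the cone splitting. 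As written, your proposal is a plausible road map rather than a proof, which is appropriate given that the paper itself treats the result as a citation.
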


Combined with 
\cite[Proposition 5.15]{atiyah1969introduction}, the existence
of the degree bounded minimal polynomials in \eqref{eq:GBmonpols} now follows from the
lemma below.

\begin{lemma}[\cite{SwSh}]
Suppose that $f, f_1, \dots, f_m\in K[x_1, \dots, x_n]$ and let 
$$
I = (t - f, t_1 - f_1, \dots, t_m - f_m)\subset K[t, t_1, \dots, t_m, x_1, \dots, x_n].
$$
Then 
\begin{enumerate}[(a)]
\item 
$$
  F(f, f_1, \dots, f_m) = 0\qquad\iff\qquad F\in I\cap K[t, t_1, \dots, t_m]
  $$
  for $F\in K[t, t_1, \dots, t_m]$.
\item
Let 
$G$ be a Gr\"obner basis of $I$
with the lexicographic order $x_1 > \cdots > x_n > t > t_1 > \cdots > t_m$.
If $f$ is algebraic over $K(f_1, \dots, f_m)$, then the minimal 
polynomial of $f$ over $K(f_1, \dots, f_m)$ can be identified with
the polynomial in 
$$
G \cap \left(K[t, t_1, \dots, t_m]\setminus K[t_1, \dots, t_m]\right)
$$
with smallest leading term.
\end{enumerate}
\end{lemma}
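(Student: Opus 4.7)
The proof splits into the elementary part (a) and the Gr\"obner-basis part (b). For (a), the implication ``$F \in I \Rightarrow F(f,f_1,\dots,f_m) = 0$'' is immediate, since substituting $t \mapsto f$ and $t_i \mapsto f_i$ annihilates every generator of $I$. For the converse, I would consider the surjective $K$-algebra homomorphism
\[
\pi: K[t,t_1,\dots,t_m,x_1,\dots,x_n] \longrightarrow K[x_1,\dots,x_n], \qquad t \mapsto f,\ t_i \mapsto f_i,\ x_j \mapsto x_j,
\]
whose kernel clearly contains $I$. Using the generators of $I$ to eliminate all occurrences of $t, t_1, \dots, t_m$, any $F$ becomes congruent modulo $I$ to a polynomial $F_0 \in K[x_1,\dots,x_n]$, and then $\pi(F) = F_0$, so $\pi(F) = 0$ forces $F \in I$. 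The stated equivalence for $F \in K[t,t_1,\dots,t_m]$ follows by restriction.

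For (b), set $J = I \cap K[t,t_1,\dots,t_m]$ and $J_0 = I \cap K[t_1,\dots,t_m]$. By the elimination theorem for lex Gr\"obner bases (applied first to the block $\{x_1,\dots,x_n\}$, then to $t$), $G' := G \cap K[t,t_1,\dots,t_m]$ is a Gr\"obner basis of $J$ and $G_0 := G \cap K[t_1,\dots,t_m]$ is a Gr\"obner basis of $J_0$. After passing to a reduced Gr\"obner basis (which preserves the set of leading monomials), let $g \in G' \setminus G_0$ have smallest leading monomial and write
\[
g = c_a(t_1,\dots,t_m)\, t^a + c_{a-1}(t_1,\dots,t_m)\, t^{a-1} + \cdots + c_0(t_1,\dots,t_m),
\]
with $c_a \neq 0$ and $a \geq 1$. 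The decisive observation is that $c_a(f_1,\dots,f_m) \neq 0$ in $K[x_1,\dots,x_n]$; otherwise $c_a \in J_0$ by part (a), so $\lm(c_a)$ would be divisible by $\lm(h)$ for some $h \in G_0$, whence $\lm(h) \mid t^a \lm(c_a) = \lm(g)$, contradicting minimality of the reduced basis.

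Applying part (a) to $g$ and dividing through by $c_a(f_1,\dots,f_m)$ exhibits $f$ as a root of a monic polynomial of degree $a$ in $K(f_1,\dots,f_m)[T]$, so $d := \deg P_{\min} \leq a$. For the reverse inequality, clear denominators in $P_{\min}$ to obtain $Q \in J$ of $t$-degree exactly $d$ whose leading coefficient $\beta(t_1,\dots,t_m)$ satisfies $\beta(f_1,\dots,f_m) \neq 0$. Reducing $Q$ first by $G_0$ can only modify coefficients in $K[t_1,\dots,t_m]$ and cannot make the $t^d$-coefficient vanish, because its value at $(f_1,\dots,f_m)$ is preserved and is nonzero. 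Hence the reduction must eventually invoke an element of $G' \setminus G_0$, whose $t$-degree is at most $d$; minimality of $g$ then forces $a \leq d$, so $a = d$. The identification is now clear: $g$, divided by $c_a(f_1,\dots,f_m)$ under the substitution $t_i \mapsto f_i$, is precisely the monic minimal polynomial of $f$. The step I expect to be the main technical hurdle is the reduction argument bounding $a \leq d$, in particular carefully tracking why reducing only by elements of $G_0$ cannot exhaust the $t$-degree.
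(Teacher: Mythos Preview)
Your argument is correct and follows the same outline as the paper's proof: part (a) via the evaluation homomorphism $\pi$ with kernel $I$ (what the paper calls ``a consequence of the division algorithm''), and part (b) via the elimination property of lex Gr\"obner bases applied successively to the blocks $\{x_1,\dots,x_n\}$ and $\{t\}$. The paper's proof of (b) is only a two-line sketch, so your version is in fact considerably more detailed; your observation that $c_a(f_1,\dots,f_m)\neq 0$ (using reducedness) and your reduction argument for $a\leq d$ are exactly the points the paper leaves implicit. One small remark: you pass to a \emph{reduced} Gr\"obner basis to run the argument, whereas the lemma as stated allows an arbitrary Gr\"obner basis $G$; this is harmless for the intended application (degree bounds), but if you want the literal statement you should note that the element of $G'\setminus G_0$ with smallest leading term still has $t$-degree $a$ and leading coefficient not in $J_0$, since its leading monomial coincides with that of the corresponding reduced-basis element.
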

\begin{proof}
  For $F\in K[t, t_1, \dots, t_m]$,
  $$
  F(f, f_1, \dots, f_m) = 0\qquad\iff\qquad F\in I\cap K[t, t_1, \dots, t_m]
  $$
  is a consequence of the division algorithm. The (elimination) ideal
  $I\cap K[t, t_1, \dots, t_m]$
  is generated by $G\cap K[t, t_1, \dots, t_m]$. The result
  can now be deduced from the fact that the minimal polynomial (after clearing denominators)
  can be identified with the polynomial containing $t$ in $G$ with smallest leading term.
\end{proof}

\newcommand{\germ}{\mathfrak}

\providecommand{\bysame}{\leavevmode\hbox to3em{\hrulefill}\thinspace}
\providecommand{\MR}{\relax\ifhmode\unskip\space\fi MR }
\providecommand{\MRhref}[2]{%
  \href{http://www.ams.org/mathscinet-getitem?mr=#1}{#2}
}
\providecommand{\href}[2]{#2}

\bibliographystyle{amsplain}
\bibliography{main}

\end{document}